\newcommand{\grad}{{\nabla}}   
\newcommand{\one}{\mathbf{1}}   
\newcommand{\real}{\mathbb{R}}  
\newcommand{\diag}{{\rm diag}}  
\newcommand{\col}{\mathrm{col}}     
\newcommand{\tran}{^{\textit{\footnotesize \texttt{T}}}} 
\newcommand{\define}{\triangleq} 
\newcommand{\cut}[1]{%
{\leavevmode\color{black}#1}%
}
\def\bLambda{{\boldsymbol \Lambda}}
\def\bPhi{{\boldsymbol \Phi}}
\def\C{{\mathbf{C}}}
\def\G{{\mathbf{G}}}
\def\I{{\mathbf{I}}}
\def\Q{{\mathbf{Q}}}
\def\R{{\mathbf{R}}}
\def\W{{\mathbf{W}}}
\def\f{{\mathbf{f}}}
\def\g{{\mathbf{g}}}
\def\h{{\mathbf{h}}}
\def\x{{\mathbf{x}}}
\def\y{{\mathbf{y}}}
\newcommand{\cE}{{\mathcal{E}}}
\newcommand{\cG}{{\mathcal{G}}}
\newcommand{\cN}{{\mathcal{N}}}
\newcommand{\cV}{{\mathcal{V}}}
\newtheorem{theorem}{Theorem}
\newtheorem{lemma}[theorem]{Lemma}
\newtheorem{rem}{Remark}
\newtheorem{assumption}{Assumption}
\let\NAT@parse\undefined
\title{\LARGE \bf
On the Performance of Gradient Tracking with Local Updates
}
\author{Edward Duc Hien Nguyen, Sulaiman A. Alghunaim, Kun Yuan and C\'esar A. Uribe
\thanks{EDHN and CAU (\textit{\{en18,cauribe\}@rice.edu}) are with the Department of Electrical and Computer Engineering, Rice University, Houston, TX, USA. SAA (\textit{sulaiman.alghunaim@ku.edu.kw}) is with the Department of Electrical Engineering, Kuwait University, Kuwait City, Kuwait. KY (\textit{kun.yuan@alibaba-inc.com}) is with Alibaba DAMO Academy, Hangzhou, Zhejiang, China.}
\thanks{Edward Nguyen is supported by a training fellowship from the Gulf Coast Consortia, on the NLM Training Program in Biomedical Informatics \& Data Science~(T15LM007093). This work is supported in part by the National Science Foundation under Grant No. 2211815.}%
}
\begin{document}

\maketitle
\thispagestyle{empty}
\pagestyle{empty}

\begin{abstract}
We study the decentralized optimization problem where a network of $n$ agents seeks to minimize the average of a set of heterogeneous non-convex cost functions distributedly. State-of-the-art decentralized algorithms like Exact Diffusion~(ED) and Gradient Tracking~(GT) involve communicating every iteration. However, communication is expensive, resource intensive, and slow. In this work, we analyze a locally updated GT method (LU-GT), where agents perform local recursions before interacting with their neighbors. While local updates have been shown to reduce communication overhead in practice, their theoretical influence has not been fully characterized. We show LU-GT has the same communication complexity as the Federated Learning setting but allows arbitrary network topologies. In addition, we prove that the number of local updates does not degrade the quality of the solution achieved by LU-GT. Numerical examples reveal that local updates can lower communication costs in certain regimes (e.g., well-connected graphs).

\end{abstract}

\section{Introduction} \label{sec:Introduction}

Distributed optimization problems have garnered significant interest due to the demand for efficiently solving data processing problems~\cite{boyd2011admm}, such as the training of deep neural networks \cite{ying2021bluefog}. Nodes, processors, and computer clusters can be abstracted as agents responsible for a partition of a large set of data. { In this work}, we study the distributed multi-agent optimization problem 
\begin{equation} \label{eq:obj_function}
    \mathop{\text{minimize}}\limits_{x \in \real^m} \quad f(x) \define \frac{1}{n}\sum_{i=1}^n f_i(x)
\end{equation}
where $f_i(\cdot): \mathbb{R}^m \rightarrow \mathbb{R}$ is a smooth, non-convex function held privately by an agent $i \in \{1,\ldots,n\}$. To find a  consensual solution $x^*$ of~\eqref{eq:obj_function}, decentralized methods have the $n$ agents cooperate according to some network topology constraints.
\par
Many decentralized methods have been proposed to solve~\eqref{eq:obj_function}. Among the most prolific include decentralized/distributed gradient descent (DGD)~\cite{ram2010distributed, cattivelli2010diffusion}, EXTRA~\cite{shi2015extra},~Exact-Diffusion/D\textsuperscript{2}/NIDS (ED)~\cite{yuan2019exactdiffI, li2017nids, yuan2020influence, tang2018d}, and Gradient Tracking (GT)~\cite{xu2015augmented, di2016next, qu2017harnessing, nedic2017achieving}. DGD is an algorithm wherein agents perform a local gradient step followed by a communication round. \cut{However, DGD has been shown not optimal} when agents' local objective functions are heterogeneous, i.e., the minimizer of \cut{functions $f_i(\cdot)$ differs from the minimizer of $f(\cdot)$}. { This shortcoming has been analyzed in~\cite{chen2013distributed, yuan2016convergence} where the heterogeneity causes the rate of DGD to incur an additional bias term with a magnitude directly proportional to the level of heterogeneity, slowing down the rate of convergence. { Moreover, this bias term is inversely influenced by the connectivity of the network (becomes larger for sparse networks) \cite{yuan2020influence,koloskova2020unified}. } }
\par
{ EXTRA, ED, and GT employ bias-correction techniques to account for heterogeneity. EXTRA and ED use local updates with memory. GT methods have each agent perform the local update with an estimate of the global gradient called the tracking variable. The effect of these techniques is observed in the analysis where the rates of these algorithms are independent of heterogeneity, i.e., the bias term proportional to the amount of heterogeneity found in the rate of DGD is removed, { leading to better rates~\cite{alghunaim2021unified,koloskova2021improved}.}
}
\par
\cut{While EXTRA, ED, and GT tackle the bias induced by heterogeneity, they require communication over the network at every iteration.} However, communication is expensive, resource intensive, and slow in practice~\cite{ying2021bluefog}. Centralized methods { in which agents communicate with a central coordinator (i.e., server) have been developed to solve~\eqref{eq:obj_function} with an explicit focus on reducing the communication cost. This has been achieved empirically by requiring agents to perform local recursions before communicating.} Among these methods include LocalGD~\cite{stich2018local, khaled2019first, khaled2020tighter, zhang2016parallel, Lin2020Don't}, Scaffold~\cite{karimireddy2020SCAFFOLD}, S-Local-GD~\cite{gorbunov2021local}, FedLin~\cite{mitra2021linear}, and Scaffnew~\cite{mishchenko2022proxskip}. Analysis on LocalGD revealed that local recursions cause agents to drift towards their local solution rather than the global optimal~\cite{khaled2019first, khaled2019tighter, koloskova2020unified}. 
Scaffold, S-Local-GD, FedLin, and Scaffnew address this issue by introducing bias-correction techniques. However, besides Scaffnew, analysis of these methods has failed to show communication complexity improvements. { Scaffnew has shown that for $\mu$-strongly-convex, $L$-smooth, and deterministic functions, the communication complexity can be improved from $O(\kappa)$ (no local recursions) to $O(\sqrt{\kappa})$ if one performs $\sqrt{\kappa}$ local recursions with}  \mbox{\cut{$\kappa \define {L}/{\mu}$.}}
\par
Local recursions in {\em decentralized methods} have been much less studied. DGD with local recursions has been studied in~\cite{koloskova2020unified}, but the 
convergence rates still have bias terms due to heterogeneity. Additionally, the magnitude of the bias term is proportional to the number of local recursions taken. Scaffnew~\cite{mishchenko2022proxskip} has been studied under the decentralized case but for the strongly convex and smooth function class. { In ~\cite{mishchenko2022proxskip} for sufficiently well connected graphs, an improvement to a communication complexity of $O(\sqrt{\kappa/(1-\lambda}))$  where $\lambda$ is the mixing rate of the matrix is shown.} 
{
GT with local updates can be seen as a special case of the time-varying graph setting  (with no connections for several iterations, and the graph is connected periodically for one iteration). Several works studied GT under time-varying graphs such as~\cite{di2016next,nedic2017achieving,scutari2019distributed,sun2019convergence,saadatniaki2020decentralized}, among these only the works \cite{di2016next,scutari2019distributed,lu2020decentralized} considered nonconvex setting. Different from~\cite{di2016next,scutari2019distributed,lu2020decentralized}, we provide explicit expressions that characterize the convergence rate in terms of the problem parameters (e.g., network topology).  We note that no proofs are given in~\cite{lu2020decentralized}. Our analysis is also of independent interest and can be readily extended to stochastic costs or arbitrary time-varying networks. }
\par
In this work, we propose and study LU-GT, a locally updated decentralized algorithm based on the bias-corrected method GT. Our contributions are as follows:
\begin{itemize}[leftmargin=*]
    \item We analyze LU-GT under the deterministic, non-convex regime. { Our analysis provides a more convenient and simpler way to analyze GT methods, which builds upon and extends the framework outlined in~\cite{alghunaim2021unified}.} 
    \item { Our analysis has a communication complexity matching the rates for previously locally updated variants of centralized and distributed algorithms.}
    \item We demonstrate that LU-GT retains the bias-correction properties of GT irrespective of the number of local recursions and that the number of local recursions does not affect the quality of the solution.
    \item { Numerical analysis shows that local recursions can reduce the communication overhead in certain regimes, e.g., well-connected graphs.}
\end{itemize}
{
This paper is organized as follows. Section~\ref{sec:alg} defines relevant notation, states the assumptions used in our analysis, introduces LU-GT, and states our main result on the convergence rate. In Section~\ref{sec:prelims}, we provide intuition into how the direction of our analysis can show that following LU-GT, agents reach a consensus that is also a first-order stationary point. We also cover relevant lemmas needed in the analysis of LU-GT. In Section~\ref{sec:analysis}, we prove the convergence rate of LU-GT. Section~\ref{sec:numerical} shows evidence that the local recursions of LU-GT can reduce communication costs in certain regimes.}

\section{ Assumptions, Algorithm and Main Result} \label{sec:alg}

We begin this section by providing some useful notation:
\begin{subequations}
	\begin{gather}
        \x^k = x \otimes \mathbf{1}_n,\quad \y^k = y \otimes \mathbf{1}_n \\
		\f(\x){=}\sum_{i=1}^n f_i(x_i), \ \
		\grad \f(\x){=}\col\{\grad f_1(x_1),\dots,\grad f_n(x_n)\} .
	\end{gather}
\end{subequations}
We define $W = [w_{ij}] \in \real^{n \times n}$ as the symmetric mixing matrix for an undirected graph $\cG = \{\cV, \cE\}$ that models the connections of a group of $n$ agents. The weight $w_{ij}$ scales the information agent $i$ receives from agent $j$. We set $w_{ij} = 0$ if $j \notin \cN_i$ where $\cN_i$ is the set of neighbors of agent $i$. We also define $\mathbf{W} = W \otimes I_d \in \real^{mn \times mn}$.

\begin{algorithm}[t!]
\caption{LU-GT} \label{alg:LU_GT}
\begin{algorithmic}[1] 
\State Input: $\x^0 = \mathbf{0} \in \mathbb{R}^{mn}$, $\y^0 = \alpha \nabla \f(\x^0)$, $\alpha>0$, $\eta>0$ $T_{\rm o} \in \mathbb{Z}_{\geq 0}$, $K \in \mathbb{Z}_+$
\State Define: $\tau = \{0, T_{\rm o}, 2T_{\rm o}, 3T_{\rm o} ...\}$
\For{$k = 0,...,K-1$}
    \If {$k \in \tau$}
    \State $\x^{k+1} = \W (\x^k - \eta \y^k)$ \label{lst:line:comm_rec_1}
    \State $\y^{k+1} = \W (\y^k + \alpha \nabla \f(\x^{k+1}) - \alpha \nabla \f(\x^{k}))$
    \Else
    \State $\x^{k+1} = \x^k - \eta \y^k$ \label{lst:line:local_rec_1}
    \State $\y^{k+1} = \y^k + \alpha \nabla \f(\x^{k+1}) - \alpha \nabla \f(\x^{k})$
    \EndIf
\EndFor
\end{algorithmic}
\end{algorithm}

The proposed method LU-GT is detailed in Algorithm~\ref{alg:LU_GT} where $\alpha$ and { $\eta$ are step-size parameters}, and $T_{\rm o} - 1$ is the number of local recursions before a round of communication. The intuition behind the algorithm is to have agents perform a descent step using a staling estimate of the global gradient for $T_{\rm o} - 1$ iterations. { Afterwards, agents perform a weighted average of their parameters with their neighbors and update their tracking variable.}
{ 
\begin{rem}
For $T_{\rm o}=1$, Algorithm \ref{alg:LU_GT} becomes equivalent to the vanilla ATC-GT \cite{xu2015augmented} with stepsize $\bar{\eta}=\eta \alpha$. This can be seen by introducing the change of variable $\g^k=(1/\alpha) \y^k$. Thus, our analysis also covers the vanilla GT method.
\end{rem}
}
\par
{
To analyze Algorithm~\ref{alg:LU_GT}, we first introduce the following time-varying matrix:

\vspace{-0.5cm}
\begin{align} \label{def:W_k}
    \W_k \triangleq \begin{cases} \W & \text{when } k \in \tau, \\ \I & \text{otherwise.} \end{cases}
\end{align}

Thus, we can succinctly rewrite Algorithm~\ref{alg:LU_GT} as follows
\begin{IEEEeqnarray}{rCl}
\label{eq:GT_rec_1}
    \x^{k+1} &= &\W_k (\x^k - \eta \y^k) \IEEEyessubnumber\\
    \y^{k+1} &= &\W_k (\y^k + \alpha \nabla \f (\x^{k+1}) - \alpha \nabla \f (\x^{k})). \IEEEyessubnumber
\end{IEEEeqnarray}
}
We now list the assumptions used in our analysis.

\begin{assumption}[Mixing matrix]\label{ass:matrix}
The mixing matrix $W$ is doubly stochastic and symmetric.
\end{assumption}
\par
{ The Metropolis-Hastings algorithm~\cite{hastings1970metro} can be used to construct mixing matrices from an undirected graph satisfying Assumption~\ref{ass:matrix}}. Moreover, from Assumption~\ref{ass:matrix}, we have that the  mixing matrix $W$ has a singular, maximum eigenvalue denoted as $\lambda_1 = 1$. All other eigenvalues are defined as $\{\lambda_i\}^{n}_{i=2}$. We define the mixing rate as \mbox{$\lambda := \max_{i \in\{2,...,n\}} \{|\lambda_i|\}$.}

\begin{assumption}[$L$-smoothness]\label{ass:smooth} 
The function $f_i : \real^m \rightarrow \real$ is $L$-smooth for $i \in\mathcal{V}$, i.e., $\lVert \nabla f_i(y) - \nabla f_i (z) \rVert \leq L \lVert y - z \rVert$, $\forall ~ y, z \in \real^m$
for some $L > 0.$ 
\end{assumption}

Our analysis of Algorithm~\ref{alg:LU_GT} leads us to find the following convergence rate.
\begin{theorem}[Convergence of LU-GT] \label{thm:convg_lugt} Let Assumptions~\ref{ass:matrix} and \ref{ass:smooth} hold, and let, $T_{\rm o} \in \mathbb{Z}_{\geq 0}$, $\eta>0$ and  \cut{$\alpha>0$} \mbox{such that}
\begin{align*}
    \eta &< \min \left \{1, {(1 - \sqrt{\lambda})}/{(\sqrt{\lambda}(1+T_{\rm o}))}\right\} \\
    \alpha &\leq \min \Biggl\{\sqrt{\frac{(1-\lambda)(1-\theta)}{16 L^2 \lambda}}, \sqrt{\frac{(\bar{\lambda}-\bar{\lambda}^2)\lambda  (1-\theta)}{8L^2\eta^2T^2_{\rm o}}}, \\
    &\qquad \qquad \qquad \qquad  \sqrt[4]{\frac{\lambda (1-\bar{\lambda})^2 (1-\theta)}{32L^4\eta^2T^2_{\rm o}}}, \frac{1}{2L}\Biggl\},
\end{align*}
where $\theta = \lambda(1 + \eta T_{\rm o})^2 < 1$. Then, for any $K\geq 1$, the output~$\x^K$, of Algorithm~\ref{alg:LU_GT} (LU-GT) with \mbox{$\x^0 = \mathbf{1} \otimes x^0$} ($x^0 \in \real^m$) has the following property:
\begin{align}
\frac{1}{K} \hspace{-1mm} \sum_{k=0}^{K-1} \hspace{-1mm} &\big(\lVert \nabla f (\bar{x}^k) \rVert^2 {+} \lVert \overline{\nabla \f} (\x^k) \rVert^2\big) \leq \frac{8\tilde{f}(\bar{x}^0)}{\eta\alpha K}  {+} \frac{4\alpha^2L^2T_{\rm o}\zeta_0}{nK(1-\bar{\lambda})^2},
\end{align}
where $\bar{x}^k  {=} \frac{1}{n}\sum_{i=1}^n x_i^k$, $\overline{\grad \f } (\x^k) {=} \frac{1}{n} \sum_{i=1}^n  \grad f_i (x_i^k)$, $\bar{\lambda} = {(1+\lambda)}/{2}$, $\tilde{f}(\bar{x}^0) = f(\bar{x}^0) - f^*$, $r_k = \lfloor {k}/{T_{\rm o}} \rfloor$, and $\zeta_0 = \lVert \nabla \mathbf{f}(\bar{\mathbf{x}}^0) - \mathbf{1} \otimes \overline{\nabla \mathbf{f}}(\bar{\mathbf{x}}^0) \rVert^2$.
\end{theorem}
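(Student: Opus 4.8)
The plan is to split the dynamics of~\eqref{eq:GT_rec_1} into a consensus (average) component and a disagreement (consensus-error) component, establish descent of $f$ along the average trajectory, control the accumulated disagreement across the communication periods, and fuse the two into one telescoping inequality.

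\textbf{Step 1 (average dynamics and descent).} Let $\J = \tfrac1n \one\one\tran \otimes I_m$ be the averaging operator, so $\bar x^k$ and $\bar y^k$ are the blocks of $\J\x^k$ and $\J\y^k$. Since $W$ is doubly stochastic (Assumption~\ref{ass:matrix}) and $\W_k\in\{\W,\I\}$, we have $\J\W_k=\J$, which annihilates the mixing step; the tracking update then telescopes to the gradient-tracking invariant $\bar y^k = \alpha\,\overline{\grad\f}(\x^k)$ for every $k$. Hence the average obeys $\bar x^{k+1} = \bar x^k - \eta\alpha\,\overline{\grad\f}(\x^k)$, an inexact gradient step with effective stepsize $\eta\alpha$. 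Applying the descent lemma for the $L$-smooth $f$, the identity $\langle a,b\rangle = \tfrac12(\|a\|^2+\|b\|^2-\|a-b\|^2)$, and the smoothness bound $\|\grad f(\bar x^k)-\overline{\grad\f}(\x^k)\|^2 \le \tfrac{L^2}{n}\sum_i\|\bar x^k - x_i^k\|^2$, and absorbing the $O(\eta^2\alpha^2)$ term via $\alpha\le\tfrac1{2L}$, I would obtain
\begin{equation*}
f(\bar x^{k+1}) \le f(\bar x^k) - \tfrac{\eta\alpha}{4}\big(\|\grad f(\bar x^k)\|^2 + \|\overline{\grad\f}(\x^k)\|^2\big) + \tfrac{\eta\alpha L^2}{2n}\|(\I-\J)\x^k\|^2 .
\end{equation*}
Summing over $k$ and telescoping turns the theorem's left-hand side into $\tilde f(\bar x^0)$ plus the accumulated disagreement $\sum_k\|(\I-\J)\x^k\|^2$, so the remaining task is to bound this sum.

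\textbf{Step 2 (disagreement recursion under time-varying mixing).} Projecting~\eqref{eq:GT_rec_1} onto the disagreement subspace and using the commutation $(\I-\J)\W_k=\W_k(\I-\J)$ gives
\begin{align*}
(\I-\J)\x^{k+1} &= \W_k\big((\I-\J)\x^k - \eta(\I-\J)\y^k\big), \\
(\I-\J)\y^{k+1} &= \W_k\big((\I-\J)\y^k + \alpha(\I-\J)(\grad\f(\x^{k+1})-\grad\f(\x^k))\big).
\end{align*}
On a mixing step $\W_k$ contracts the disagreement by $\lambda$, whereas on a local step $\W_k=\I$ performs no contraction but lets the $-\eta(\I-\J)\y^k$ term accumulate; bounding $\|\grad\f(\x^{k+1})-\grad\f(\x^k)\|$ by $L$-smoothness couples the $\x$- and $\y$-disagreements into a two-dimensional linear recursion. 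The key quantitative move is to propagate this recursion across one full period of $T_{\rm o}$ iterations (one mixing step then $T_{\rm o}-1$ local steps): the local steps inflate the weighted disagreement by at most $(1+\eta T_{\rm o})$, the single mixing step multiplies by $\lambda$, so each period contracts it by $\theta=\lambda(1+\eta T_{\rm o})^2<1$, which is exactly what the bound $\eta<(1-\sqrt\lambda)/(\sqrt\lambda(1+T_{\rm o}))$ enforces.

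\textbf{Step 3 (combine, and the main obstacle).} With the per-period factor $\theta$ in hand, I would sum the resulting geometric series to bound $\sum_k\|(\I-\J)\x^k\|^2$ by the initial disagreement plus a feedback term in $\sum_k\|\overline{\grad\f}(\x^k)\|^2$. Under $\x^0=\one\otimes x^0$ the iterate disagreement vanishes and the initial disagreement is purely the tracking mismatch $\|(\I-\J)\y^0\|^2=\alpha^2\zeta_0$, which, after propagation, produces the stated $\tfrac{4\alpha^2 L^2 T_{\rm o}\zeta_0}{nK(1-\bar\lambda)^2}$ term (note $1-\bar\lambda=(1-\lambda)/2$ surfaces from the block contraction). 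Substituting into Step~1 places the gradient sum on both sides; the four upper bounds on $\alpha$ are precisely the thresholds making the fed-back gradient term dominated by the reserved $-\tfrac{\eta\alpha}{4}\|\overline{\grad\f}(\x^k)\|^2$, after which dividing by $K\eta\alpha/8$ yields the rate, with $\tfrac{8\tilde f(\bar x^0)}{\eta\alpha K}$ coming from the objective descent. I expect the hard part to be Step~2: cleanly carrying the coupled $\x$–$\y$ disagreement across a period with a time-varying $\W_k$ while keeping constants explicit, since the local steps break the one-step contraction and force a block-wise (rather than iterate-wise) Lyapunov argument and careful bookkeeping of the $(1+\eta T_{\rm o})$ growth against the $\lambda$ contraction.
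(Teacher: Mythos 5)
Your proposal follows essentially the same route as the paper: your projection onto $\I-\J$ is exactly the paper's $\hat{\Q}\hat{\Q}\tran$ decomposition (the paper tracks $\bPhi^k$, whose norm equals your coupled disagreement), your per-period contraction $\theta=\lambda(1+\eta T_{\rm o})^2$ with $(1+\eta T_{\rm o})$ growth over local steps is precisely the content of Lemmas~\ref{lem:prod_matrix_local}--\ref{lem:prod_matrix_one_round} and the consensus inequality of Lemma~\ref{lem:consen_ineq}, and your descent step, feedback absorption via the bounds on $\alpha$, and the identification $\lVert\bPhi^0\rVert^2=\alpha^2\zeta_0$ all match the paper's proof of Theorem~\ref{thm:convg_lugt}. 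The plan is correct and requires only the explicit constant bookkeeping you already flag as the hard part.
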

{
\begin{rem}\label{rem:1}
If in Theorem~\ref{thm:convg_lugt} we consider a sufficiently well-connected graph where $1 \geq \lambda + \sqrt{\lambda}$ and set $\alpha \propto {(1-\lambda)}/{L}$, and $\eta \propto {1}/{T_{\rm o}}$, then we obtain the convergence rate,
\begin{align*}
    \frac{1}{K} \hspace{-1mm} \sum_{k=0}^{K-1} \bigg( \hspace{-1mm} \lVert \nabla f (\bar{x}^k) \rVert^2 {+} \lVert \overline{\nabla \f} (\x^k) \rVert^2 \hspace{-1mm} \bigg) {\leq }
     O \biggl( \frac{T_{\rm o} \tilde{f}(\bar{x}^0)}{K} {+} \frac{T_{\rm o}\zeta_0}{nK} \hspace{-1mm}  \biggl).
\end{align*}  
\end{rem}
}
{ Theorem~\ref{thm:convg_lugt} implies that LU-GT matches the same communication complexity as the Federated Learning setting (e.g., \cite{karimireddy2020SCAFFOLD}) but allows arbitrary network topologies.}
\section{Preliminaries} \label{sec:prelims}

In this section, we provide intuition for the used proof technique. We show some properties on $\x^k, \y^k$ that will guarantee agents reach a consensus on a first-order stationary point of~\eqref{eq:obj_function}. In addition, we outline a series of transformations performed on~\eqref{eq:GT_rec_1} to simplify the analysis. 

\textbf{Consensus and Optimality:} 
{
Consider the trajectory of a sequence of parameters $\x^k, \y^k$ generated by Algorithm~\ref{alg:LU_GT} converge to a stationary point $(\x, \y)$. Then, from Line~\ref{lst:line:local_rec_1} of Algorithm~\ref{alg:LU_GT} it holds that} $
    \x = \x - \eta \y$, and $\y = \mathbf{0}$.
{ As a result, from Line~\ref{lst:line:comm_rec_1} of Algorithm~\ref{alg:LU_GT},  we have} $\x = \W(\x - \eta \y)$ and $\x = \W\x$. Moreover, initialization, $\y^0 = \alpha \nabla \f (\x^0)$ guarantees that 
\begin{equation} \label{eq: y_avg}
    \bar{y}^{k+1} = \bar{y}^k + \alpha( \overline{\nabla \f}(\x^{k+1}) - \overline{\nabla \f}(\x^{k})) = \alpha \overline{\nabla \f}(\x^{k+1}), 
\end{equation}
where $\bar{y}^k = \frac{1}{n}\sum_{i=1}^n y_i^k$. Hence, at the stationary point $(\x, \y)$, $\alpha \overline{\nabla \f}(\x)=0$. Thus, \mbox{$x_1 = x_2 = ... = x_n$}, i.e, all agents reach the same stationary point.

\textbf{Transformation of Algorithm~\ref{alg:LU_GT}:}
{ We perform a series of transformations on \eqref{eq:GT_rec_1} to simplify the analysis and accurately characterize the behavior of our algorithm. In particular, the trajectory of the average of agent parameters $\bar{x}^k$ is defined to show convergence of $\bar{x}^k$ to a first order stationary point of~\eqref{eq:obj_function}. { Motivated by \cite{ alghunaim2021unified}, the deviation of agent parameters $\x^k$ from the average $\bar{\x}^k \define \bar{x}^k \otimes \one_n$ and the deviation of the gradient tracking variable $\y^k$ from its average \mbox{$\bar{\y}^k \define \bar{y}^k \otimes \one_n$} are considered jointly as one augmented quantity, which simplifies the analysis.}}


Note that the mixing matrix $W$ can be decomposed as 
\begin{align*}
	W=Q \Lambda Q\tran = \begin{bmatrix}
		\frac{1}{\sqrt{n}}  \one & \hat{Q}
	\end{bmatrix} \begin{bmatrix}
		1 & 0 \\
		0 & \hat{\Lambda}
	\end{bmatrix} \begin{bmatrix}
		\frac{1}{\sqrt{n}} \one\tran \vspace{0.6mm} \\  \hat{Q}\tran
	\end{bmatrix},
\end{align*}
where $\hat{\Lambda}=\diag\{\lambda_i\}_{i=2}^n$, $Q$ is a square orthogonal  ($QQ\tran=Q\tran Q=I$), and $\hat{Q}$ is a matrix of size ${n \times (n-1)}$ such that $\hat{Q}\hat{Q}\tran=I_n-\tfrac{1}{n} \one \one\tran$  and  $\one\tran \hat{Q}=0$. From the above, we have
\begin{align*}
	\W&=\Q \bLambda \Q\tran = \begin{bmatrix}
		\frac{1}{\sqrt{n}}   \one \otimes I_m & \hat{\Q}
	\end{bmatrix} \begin{bmatrix}
		I_m & 0 \\
		0 & \hat{\bLambda}
	\end{bmatrix} \begin{bmatrix}
		\frac{1}{\sqrt{n}}  \one\tran \otimes I_m \\ \hat{\Q}\tran
	\end{bmatrix},
\end{align*}
\mbox{where $\hat{\bLambda} \define \hat{\Lambda} \otimes I_m \in \real^{m(n-1)\times m(n-1)}$,  $\Q \in \real^{m n \times m n}$} \mbox{is an orthogonal matrix, and $\hat{\Q}\define \hat{U} \otimes I_m \in \real^{m n \times m(n-1)}$} satisfies:
\begin{align} \label{QQproperties}
	\hat{\Q}\tran \hat{\Q}{=}\I, \ \hat{\Q}\hat{\Q}\tran{=}\I{-}\tfrac{1}{n} \one \one\tran \otimes I_m, \ (\one\tran \otimes I_m) \hat{\Q}{=}0.
\end{align}
Using the notation defined in \eqref{def:W_k}, it then follows that
\begin{equation} \label{def:Lambda_k}
    \bLambda_k \triangleq \begin{cases} \bLambda & \text{when } k \;\mathrm{mod}\; (T_{\rm o}-1) = 0, \\ \I & \text{otherwise}. \end{cases}
\end{equation}



{
Equation~\eqref{QQproperties} directly leads to
\begin{align*}
    \lVert \hat{\Q}\tran \x \rVert^2 &= \x\tran \hat{\Q}\hat{\Q}\tran\hat{\Q}\hat{\Q}\tran \x = \lVert \hat{\Q} \hat{\Q}\tran \x \rVert^2 = \lVert \x - \bar{\x} \rVert^2 \\
    \lVert \hat{\Q}\tran \y \rVert^2 &= \y\tran \hat{\Q}\hat{\Q}\tran\hat{\Q}\hat{\Q}\tran \y = \lVert \hat{\Q} \hat{\Q}\tran \y \rVert^2 = \lVert \y - \bar{\y} \rVert^2.
\end{align*}

In addition, we know that $\Q \define \begin{bmatrix} \frac{1}{\sqrt{n}}   \one \otimes I_m & \hat{\Q} \end{bmatrix}$. To recover the average $\bar{x}$ from the augmented vector $\x$, the following operation can be performed $
(\frac{1}{n}\one\tran \otimes I_m) \x = \bar{x}$.
}
Hence, we multiply (\ref{eq:GT_rec_1}) by $\Q \tran$ and simplify to get
\begin{IEEEeqnarray}{rCl}
    \Q{\tran} \x^{k+1} &=& \bLambda_{k} \Q{\tran} (\x^k - \eta \y^k) \nonumber\\ 
    \Q{\tran} \y^{k+1} &=& {\bLambda}_{k} \Q\tran \y^k +  \alpha {\bLambda}_{k} \Q\tran (\nabla \f (\x^{k+1}) - \nabla \f (\x^k)) \nonumber\\ 
    \begin{bmatrix} \bar{x}^{k+1} \\ \hat{\Q}\tran \x^{k+1} \end{bmatrix} &=& \bLambda_{k} \left( \begin{bmatrix} \bar{x}^{k} \\ \hat{\Q}\tran \x^{k} \end{bmatrix} - \begin{bmatrix}  \eta \alpha \overline{\nabla \f}(\x^{k}) \\ \eta \hat{\Q}\tran \y^k \end{bmatrix} \right) \nonumber \\
    \begin{bmatrix} \overline{\nabla \f}(\x^{k+1}) \\ \hat{\Q}\tran \y^{k+1} \end{bmatrix} &{=}&  {\bLambda}_{k} \begin{bmatrix} \overline{\nabla \f}(\x^k)\\ \hat{\Q}\tran \y^k \end{bmatrix}\nonumber\\
    &+&\alpha {\bLambda}_{k} \begin{bmatrix} \overline{\nabla \f}(\x^{k+1)}) - \overline{\nabla \f}(\x^{k}) \\ \hat{\Q}\tran (\nabla \f (\x^{k+1}) - \nabla \f (\x^k))  \end{bmatrix}. \nonumber
\end{IEEEeqnarray}

Using the structure of ${\bLambda}_{k}$, we then have
\begin{IEEEeqnarray}{rCl}\label{eq:final_transformed_recursion}
    \bar{x}^{k+1} &{=}& \bar{x}^k - \eta \alpha \overline{\nabla \f}(\x^k) \IEEEyessubnumber \label{eq:avg_x_desc}\\
    \hat{\Q}\tran \x^{k+1} &{=}& \hat{\bLambda}_k \hat{\Q}\tran (\x^k - \eta \y^k) \IEEEyessubnumber \\
    \hat{\Q}\tran \y^{k{+}1} &{=}& \hat{\bLambda}_k \hat{\Q}\tran \y^k {+}\alpha \hat{\bLambda}_k \hat{\Q}\tran (\nabla \f(\x^{k{+}1}) {-} \nabla \f(\x^{k})). \IEEEyessubnumber
\end{IEEEeqnarray}
\cut{ Equation~\eqref{eq:avg_x_desc} shows that the average of agent parameters, $\bar{x}$, is updated by performing a gradient descent step using the global gradient evaluated at the past average gradients. Then, $\bar{x}$ will converge to a stationary point in the limit. Therefore, if agents reach a consensus, this consensus will be a stationary point of~\eqref{eq:obj_function}.}  
{
We then convert~\eqref{eq:final_transformed_recursion} into matrix notation 
\begin{IEEEeqnarray}{rCl}\label{eq:final_transformed_recursion_matrix_x_delta}
    &&\bar{x}^{k+1} = \bar{x}^k - \eta \alpha \overline{\nabla \f}(\x^k) \IEEEyessubnumber \\
    &&\begin{bmatrix}
        \hat{\Q}\tran \x^{k+1} \\ \hat{\Q}\tran \y^{k+1}  
    \end{bmatrix} = \begin{bmatrix}
        \hat{\bLambda}_k & -\eta \hat{\bLambda}_k \\
        \mathbf{0} & \hat{\bLambda}_k
    \end{bmatrix} \begin{bmatrix}
        \hat{\Q}\tran \x^{k} \\ \hat{\Q}\tran \y^{k} \nonumber  
    \end{bmatrix} \nonumber \\
    &&\quad \quad \quad + \alpha \begin{bmatrix}
        \mathbf{0} \\
        \hat{\bLambda}_k \hat{\Q}\tran (\nabla \f(\x^{k+1}) - \nabla \f(\x^{k}))
    \end{bmatrix}. \IEEEyessubnumber
\end{IEEEeqnarray}

}

By iterating~\eqref{eq:final_transformed_recursion_matrix_x_delta} up to $r_k T_{\rm o}$ where $r_k = \lfloor k/T_{\rm o} \rfloor$ it follows that
{
\eqref{eq:GT_rec_1} can be rewritten as 
\begin{IEEEeqnarray}{rCl}\label{eq:final_transformation}
    \bar{x}^{k+1} &=& \bar{x}^k - \eta \alpha \overline{\nabla \f}(\x^k) \IEEEyessubnumber\\
    \bPhi^{k+1} &=& \left( \prod_{l=k}^{r_kT_{\rm o}} \mathbf{G}_l
     \right) \bPhi^{r_kT_{\rm o}} \nonumber \\
     &{+}& \alpha \h^{k+1} {+} \alpha \sum_{t= r_k T_{\rm o}}^{k-1} \left( \prod_{l=k-1}^{t} \mathbf{G}_l \right) \h^{t{+}1} \IEEEyessubnumber
\end{IEEEeqnarray}
where $\bPhi^k \define \begin{bmatrix} \hat{\Q}\tran \x^k \\ \hat{\Q}\tran \y^k
    \end{bmatrix} ,\quad \G_k \define \begin{bmatrix}
        \hat{\bLambda}_k & -\eta\hat{\bLambda}_k \\
        \mathbf{0} & \hat{\bLambda}_k
    \end{bmatrix}$, and
\begin{align*}
    \quad \h^{k+1} &\define \begin{bmatrix}
        \mathbf{0} \\
        \hat{\bLambda}_k \hat{\Q}\tran (\nabla \f(\x^{k+1}) - \nabla \f(\x^{k}))
    \end{bmatrix}. 
\end{align*} 
In the next Section, we analyze and bound the trajectory of the augmented consensus quantity $\lVert \bPhi^k \rVert$ necessary to establish the convergence of Algorithm~\ref{alg:LU_GT}. 
}

\section{Analysis on Convergence of Algorithm~\ref{alg:LU_GT}} \label{sec:analysis}


{ \cut{In this section we prove our main result in Theorem~\ref{thm:convg_lugt}. We start by introducing a series of technical lemmas that will help us build the desired result.} Lemma~\ref{lem:prod_matrix_local} and Lemma~\ref{lem:prod_matrix_one_round} quantify the effect of local steps and the mixing matrix $W$ on the augmented consensus quantity. Lemma~\ref{lem:param_fluc} provides a bound of the deviation of the parameter $\x^k$ between iterations. This is needed in Lemma~\ref{lem:h_ineq} to bound the quantity $\h^k$ used in the bound on the augmented consensus quantity.}

\begin{lemma} \label{lem:prod_matrix_local} 
For iterates $t$, and $k$ of \mbox{Algorithm~\ref{alg:LU_GT}} where \mbox{$r_{k}T_{\rm o} < t < k-1$ and $k-1, t \notin \tau$, the following matrix} inequality holds
\begin{equation}
    \left \lVert \prod_{l=k-1}^{t} \G_l \right \rVert  \leq 1 + \eta (k-1-t) <  1 + \eta T_{\rm o}.
\end{equation} 
\end{lemma}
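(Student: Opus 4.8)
The plan is to exploit the fact that, over a run of purely local iterations, every factor in the product is one and the same constant matrix, so the product is simply a matrix power that can be evaluated in closed form. First I would observe that under the hypotheses $r_kT_{\rm o} < t < k-1$ with $t, k-1 \notin \tau$, every index $l \in \{t, t+1, \dots, k-1\}$ lies strictly between two consecutive communication instants and therefore satisfies $l \notin \tau$. By the definition \eqref{def:W_k} this gives $\W_l = \I$, so $\hat{\bLambda}_l = \I$ and each factor collapses to the single fixed matrix
\[
    \G_l \;=\; \G \;\define\; \begin{bmatrix} \I & -\eta\I \\ \mathbf{0} & \I \end{bmatrix}.
\]
Consequently $\prod_{l=k-1}^{t}\G_l = \G^{\,p}$, where $p$ is the number of factors in the window (the local-step count $k-1-t$ appearing in the claim).

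The key computation is then to evaluate the power and bound its norm. Writing $\G = \I + \N$ with $\N \define \begin{bmatrix} \mathbf{0} & -\eta\I \\ \mathbf{0} & \mathbf{0} \end{bmatrix}$, the strictly block-upper-triangular structure yields $\N^2 = \mathbf{0}$, so the binomial expansion truncates after one term:
\[
    \G^{\,p} = (\I + \N)^{p} = \I + p\,\N = \begin{bmatrix} \I & -p\eta\I \\ \mathbf{0} & \I \end{bmatrix}.
\]
Since $\N\tran\N = \bdiag(\mathbf{0}, \eta^2\I)$, we have $\lVert \N \rVert = \eta$, and the triangle inequality gives $\lVert \G^{\,p} \rVert \le \lVert \I \rVert + p\lVert \N \rVert = 1 + p\eta$. (If a sharper statement were desired one could instead compute the top singular value of $\begin{bmatrix} 1 & -p\eta \\ 0 & 1 \end{bmatrix}$ and verify directly that it is at most $1 + p\eta$, but the triangle-inequality estimate already suffices.) Substituting $p = k-1-t$ delivers the first inequality $\lVert \prod_{l=k-1}^{t}\G_l \rVert \le 1 + \eta(k-1-t)$.

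Finally, the strict upper bound is a counting statement: because $t$ and $k-1$ both lie in the open interval between the consecutive communication rounds $r_kT_{\rm o}$ and $(r_k+1)T_{\rm o}$, the window length obeys $k-1-t < T_{\rm o}$, whence $1 + \eta(k-1-t) < 1 + \eta T_{\rm o}$. I do not expect a genuine obstacle here; the only points demanding care are (i) confirming that every index in the product is indeed a communication-free step so that all factors equal $\G$, and (ii) the elementary norm estimate, where the nilpotency $\N^2 = \mathbf{0}$ is exactly what makes the power — and hence the bound — grow linearly in the number of local steps rather than exponentially.
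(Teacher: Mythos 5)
Your proposal is correct and follows essentially the same route as the paper: both compute the product of the identical local-step factors in closed form as $\I$ plus a nilpotent off-diagonal block scaled by $\eta(k-1-t)$, and then apply the triangle (sub-additivity) inequality for the norm. Your explicit justification via $\N^2=\mathbf{0}$ and the counting argument $k-1-t<T_{\rm o}$ merely makes precise steps the paper leaves implicit.
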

\proof
\begin{align*}
   & \left \lVert \prod_{l=k-1}^{t} \G_l \right \rVert = \left \lVert \begin{bmatrix}
            \I & -\eta(k-1-t)\I \\
            \mathbf{0} & \I
        \end{bmatrix}  \right \rVert \\
        &= \left \lVert \I {+} \begin{bmatrix}
            \mathbf{0} & {-}\eta(k{-}1{-}t)\I \\
            \mathbf{0} & \mathbf{0}
        \end{bmatrix} \right \rVert \leq 1 {+} \eta (k {-} 1 {-} t) \leq 1 {+} \eta T_{\rm o}.
\end{align*}
{
The first equality follows from multiplying \mbox{$\G_l$} from \mbox{$l=t$} to \mbox{$l=k-1$}. The second equality follows from directly decomposing the result matrix product as a sum. The final step uses the sub-additive property of matrix norms.}
\endproof
\begin{lemma} \label{lem:prod_matrix_one_round} Suppose that Assumption~\ref{ass:matrix} holds. For an iterate \mbox{$k$ of Algorithm~\ref{alg:LU_GT} where \mbox{$r_{k}T_{\rm o} < k$} and $k \notin \tau$, the} following matrix inequality holds
\begin{equation}
    \left \lVert \prod_{l=k}^{r_k T_{\rm o}} \G_l  \right \rVert \leq \lambda(1 + \eta T_{\rm o} ).
\end{equation}
\end{lemma}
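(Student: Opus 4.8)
The plan is to exploit the block-triangular structure of each $\G_l$ together with the key counting observation that, over the index window $l = r_k T_{\rm o}, \ldots, k$, the factor $\hat{\bLambda}_l$ equals $\hat{\bLambda}$ for exactly one index and equals $\I$ for all the others. Indeed, since $r_k = \lfloor k/T_{\rm o}\rfloor$ and $k \notin \tau$, the only multiple of $T_{\rm o}$ lying in the closed interval $[r_k T_{\rm o}, k]$ is $r_k T_{\rm o}$ itself; hence $\hat{\bLambda}_{r_k T_{\rm o}} = \hat{\bLambda}$ while $\hat{\bLambda}_l = \I$ for every $r_k T_{\rm o} < l \leq k$. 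This lets me split the product $\prod_{l=k}^{r_k T_{\rm o}} \G_l = \G_k \cdots \G_{r_k T_{\rm o}+1}\,\G_{r_k T_{\rm o}}$ into the $k - r_k T_{\rm o}$ purely local factors (all with identity diagonal blocks) times the single communication factor $\G_{r_k T_{\rm o}}$.

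First I would compute the product of the local factors exactly as in Lemma~\ref{lem:prod_matrix_local}: each such factor is $\bigl[\begin{smallmatrix} \I & -\eta\I \\ \mathbf{0} & \I \end{smallmatrix}\bigr]$, and multiplying $k - r_k T_{\rm o}$ of them yields the unipotent block matrix with off-diagonal block $-\eta(k - r_k T_{\rm o})\I$. Next I would multiply this on the right by $\G_{r_k T_{\rm o}} = \bigl[\begin{smallmatrix} \hat{\bLambda} & -\eta\hat{\bLambda} \\ \mathbf{0} & \hat{\bLambda} \end{smallmatrix}\bigr]$; a direct block computation gives $\bigl[\begin{smallmatrix} \hat{\bLambda} & -\eta(1 + k - r_k T_{\rm o})\hat{\bLambda} \\ \mathbf{0} & \hat{\bLambda} \end{smallmatrix}\bigr]$, where the coefficient $1 + k - r_k T_{\rm o}$ absorbs the extra $-\eta\hat{\bLambda}$ block contributed by the communication factor.

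To bound the norm I would factor out the mixing part, writing the resulting matrix as $\bigl[\begin{smallmatrix} \I & -\eta c\I \\ \mathbf{0} & \I \end{smallmatrix}\bigr]\,\diag(\hat{\bLambda},\hat{\bLambda})$ with $c = 1 + k - r_k T_{\rm o}$, which is legitimate since $\hat{\bLambda}$ commutes with scalar multiples of $\I$. Submultiplicativity of the spectral norm then splits the bound into a product: the unipotent factor contributes at most $1 + \eta c$ by sub-additivity (the same estimate used in Lemma~\ref{lem:prod_matrix_local}), and $\lVert \diag(\hat{\bLambda},\hat{\bLambda}) \rVert = \lVert \hat{\bLambda} \rVert = \lambda$ by Assumption~\ref{ass:matrix} and the definition of the mixing rate. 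Finally, since $k \notin \tau$ forces the integer $k - r_k T_{\rm o}$ to satisfy $k - r_k T_{\rm o} \leq T_{\rm o} - 1$, I get $c \leq T_{\rm o}$ and hence the claimed bound $\lambda(1 + \eta T_{\rm o})$.

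The only delicate point—and the step I would be most careful about—is the index bookkeeping: rigorously establishing that exactly one factor in the window carries $\hat{\bLambda}$, and counting the local factors so that the off-diagonal coefficient comes out as $1 + (k - r_k T_{\rm o})$ rather than $k - r_k T_{\rm o}$. Everything else reduces to the block-matrix norm argument already used in Lemma~\ref{lem:prod_matrix_local}, now carrying one additional scalar factor of $\lambda$ from the single mixing step, which is precisely the source of the $\lambda$ prefactor distinguishing this bound from the local-only case.
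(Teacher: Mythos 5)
Your proposal is correct and follows essentially the same route as the paper: you compute the product exactly, obtaining the block upper-triangular matrix $\bigl[\begin{smallmatrix} \hat{\bLambda} & -\eta(1+k-r_kT_{\rm o})\hat{\bLambda} \\ \mathbf{0} & \hat{\bLambda}\end{smallmatrix}\bigr]$, and then bound its norm by $\lambda(1+\eta T_{\rm o})$ using $k - r_kT_{\rm o} \leq T_{\rm o}-1$. The only (cosmetic) difference is the last step: you factor out $\diag(\hat{\bLambda},\hat{\bLambda})$ and invoke submultiplicativity, whereas the paper permutes coordinates to reduce to $2\times 2$ blocks and factors $\lambda_i$ out of each block; both yield the identical bound, and your variant is arguably slightly cleaner.
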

\proof
    \begin{align*}
       \prod_{l=k}^{r_k T_{\rm o}} \G_l  \define \C &=  \begin{bmatrix}
            \hat{\bLambda} & -\eta \hat{\bLambda} \\
            \mathbf{0} & \hat{\bLambda} 
        \end{bmatrix}
        \begin{bmatrix}
            \I & -\eta(k-r_k T_{\rm o})\I \\
            \mathbf{0} & \I
        \end{bmatrix} \\ &= \begin{bmatrix}
            \hat{\bLambda} & -\eta(k - r_k T_{\rm o}) \hat{\bLambda}- \eta\hat{\bLambda} \\
            \mathbf{0} & \hat{\bLambda}
        \end{bmatrix}.
    \end{align*}
\par
This result directly follows from multiplying \mbox{$\G_l$} from \mbox{$l=r_k T_{\rm o}$} to \mbox{$l=k$}. 
\par
There exists a coordinate of transformation matrix $\R$ such that $\R\tran \C \R = \text{blkdiag} \{\C\}_{i=2}^n $,
where
\begin{align*}
    \C_i &= \begin{bmatrix}
        \lambda_i & -\eta (k - r_k T_{\rm o}) \lambda_i - \eta \lambda_i \\
        0 & \lambda_i
    \end{bmatrix} \\
    &= \lambda_i \left ( I + \begin{bmatrix}
        0 & -\eta (k - r_k T_{\rm o}) - \eta  \\
        0 & 0
    \end{bmatrix}\right).
\end{align*}
To get the above result, we factored out $\lambda_i$ and decomposed the matrix as a sum of matrices. Hence,
\begin{align*}
\lVert \C_i \rVert &\leq \lambda_i(1 + \eta (k - r_k T_{\rm o})+ \eta) \\
\lVert \C \rVert &\leq \lambda(1 + \eta (k - r_k T_{\rm o})+ \eta) \leq \lambda(1 + \eta(T_{\rm o})).
\end{align*}
Here we first used the sub-additive property of matrix norms. Then, we took advantage of the block-diagonal structure of \mbox{$\R\tran \C \R$} and the fact that $\lVert \R \rVert = 1$.
\endproof
\begin{lemma} \label{lem:param_fluc} \cut{\mbox{Let $0 < \eta, \alpha < 1$} and $k \geq 0$. Then, an iterate $\x^k$ of Algorithm~\ref{alg:LU_GT} has the following property:
\begin{equation*}
    \lVert \x^{k+1} - \x^{k} \rVert^2{=}\begin{cases}
       4 \lVert \bPhi^k \rVert^2 {+} 4n\eta^2\alpha^2 \lVert \overline{\nabla \f} (\x^k) \rVert^2 & k \in \tau, \\
        4 \eta^2 \lVert \bPhi^k \rVert^2 {+} 4n \eta^2 \alpha^2 \lVert \overline{\nabla \f} (\x^k) \rVert^2  & \text{else}.
    \end{cases}
\end{equation*}
where $n$ is the number of agents.}
\end{lemma}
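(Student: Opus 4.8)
The plan is to handle the two cases $k\in\tau$ and $k\notin\tau$ separately, starting from the compact recursion $\x^{k+1}=\W_k(\x^k-\eta\y^k)$ in~\eqref{eq:GT_rec_1}, and in each case to split the increment $\x^{k+1}-\x^k$ into its network-average component and its deviation component, which are orthogonal under the projector $P\define\tfrac1n\one\one\tran\otimes I_m$. The unifying observation is that the average update $\bar{x}^{k+1}=\bar{x}^k-\eta\alpha\,\overline{\nabla\f}(\x^k)$ from~\eqref{eq:avg_x_desc} is the same in both branches, so $P(\x^{k+1}-\x^k)=-\eta\bar{\y}^k$; using the identity $\bar{y}^k=\alpha\,\overline{\nabla\f}(\x^k)$ in~\eqref{eq: y_avg}, its squared norm is $n\eta^2\alpha^2\lVert\overline{\nabla\f}(\x^k)\rVert^2$ regardless of $k$. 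This accounts for the common second term on the right-hand side, and the two cases differ only through the deviation component.

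For the ``else'' branch ($k\notin\tau$) we have $\W_k=\I$, so $\x^{k+1}-\x^k=-\eta\y^k$ exactly. Decomposing $\y^k=\bar{\y}^k+(\y^k-\bar{\y}^k)$ orthogonally and invoking $\lVert\y^k-\bar{\y}^k\rVert^2=\lVert\hat{\Q}\tran\y^k\rVert^2$ from~\eqref{QQproperties} gives $\lVert\x^{k+1}-\x^k\rVert^2=\eta^2\lVert\hat{\Q}\tran\y^k\rVert^2+n\eta^2\alpha^2\lVert\overline{\nabla\f}(\x^k)\rVert^2$. Since $\lVert\hat{\Q}\tran\y^k\rVert^2\le\lVert\bPhi^k\rVert^2$, this is precisely the stated bound for this branch, the factor $4$ being a loose over-estimate chosen so the two branches share one right-hand side.

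For the $\tau$ branch ($k\in\tau$) we have $\W_k=\W$, hence $\x^{k+1}-\x^k=(\W-\I)\x^k-\eta\W\y^k$. Because $(\W-\I)$ annihilates the consensus space while $\W$ fixes it, the deviation part equals $\hat{\Q}\big[(\hat{\bLambda}-\I)\hat{\Q}\tran\x^k-\eta\hat{\bLambda}\hat{\Q}\tran\y^k\big]$ after passing to the $\hat{\Q}$-coordinates of~\eqref{QQproperties}. I would bound its squared norm with $\lVert a-b\rVert^2\le2\lVert a\rVert^2+2\lVert b\rVert^2$ together with the spectral estimates $\lVert\hat{\bLambda}\rVert\le\lambda\le1$ and $\lVert\hat{\bLambda}-\I\rVert\le1+\lambda\le2$, then use $\eta<1$ to fold everything into a bound of the form $c\,\lVert\bPhi^k\rVert^2+n\eta^2\alpha^2\lVert\overline{\nabla\f}(\x^k)\rVert^2$ for an absolute constant $c$, which is absorbed into the stated (uniform) constants.

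The delicate step is the $\tau$-case deviation bound: one must keep the cross term between the $\x$- and $\y$-deviations under control with Young's inequality and then collapse the $\hat{\bLambda}$-dependent factors into a single absolute constant using $|\lambda_i|\le\lambda\le1$ and $\eta,\alpha<1$. I would also read the stated ``$=$'' as ``$\le$'', since $\lVert\hat{\Q}\tran\y^k\rVert^2\le\lVert\bPhi^k\rVert^2$ and the constant $4$ are upper estimates used only to present both branches with a common right-hand side; the subsequent convergence analysis (the bound on $\lVert\bPhi^k\rVert$ and its use in Lemma~\ref{lem:h_ineq}) only relies on the inequality direction.
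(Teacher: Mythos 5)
Your proof follows essentially the same route as the paper's: the same case split on $k\in\tau$ versus $k\notin\tau$, the same decomposition of the increment into its consensus--average part (evaluated via $\bar{y}^k=\alpha\,\overline{\nabla \f}(\x^k)$ from~\eqref{eq: y_avg}) and its deviation part (folded into $\lVert \bPhi^k\rVert^2$ using~\eqref{QQproperties} and $\eta<1$), and the same reading of the stated ``$=$'' as ``$\leq$''; your use of exact orthogonality in place of the paper's repeated application of $\lVert a+b\rVert^2\le 2\lVert a\rVert^2+2\lVert b\rVert^2$ is only a cosmetic sharpening of the ``else'' branch. The one point to watch is the constant in the $k\in\tau$ branch: with $\lVert \hat{\bLambda}-\I\rVert\le 1+\lambda\le 2$ your Young step gives $2(1+\lambda)^2\le 8$ on the $\hat{\Q}\tran\x^k$ term rather than the stated $4$, so it is not literally ``absorbed into the stated constants'' --- although the paper's own derivation has exactly the same looseness (its ``$4\lVert\x^k-\bar{\x}^k\rVert^2$'' implicitly requires $\lVert\W-\I\rVert^2\le 2$, which Assumption~\ref{ass:matrix} alone does not guarantee), and nothing downstream depends on the precise value of this constant.
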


\proof
{ Depending on \mbox{$k$}, we have two possibilities} 
\begin{equation*}
    \lVert \x^{k+1} - \x^{k} \rVert^2 = \begin{cases}
        \lVert (\W - \I)\x^{k} - \eta\W \y^{k} \rVert^2 & \text{when } k \in \tau, \\
        \lVert \eta \y^k \rVert^2 & \text{otherwise}.
    \end{cases}
\end{equation*}
We start by bounding the first case:
\begin{align*}
    &\lVert (\W - \I)\x^k - \eta \W \y^k \rVert^2 \\
    &= \lVert (\W - \I)(\x^k - (\mathbf{1} \otimes \bar{x}^k)) - \eta \W \y^k\rVert^2 \\
    &\leq 4 \lVert \x^k - \bar{\x}^k \rVert^2 + 2 \lVert \eta \y^k \rVert^2 \\
    &\leq 4 \lVert \x^k - \bar{\x}^k \rVert^2 + 4 \eta^2 \lVert \y^k - \bar{\y}^k \rVert^2  + 4 \eta^2 \lVert \bar{\y}^k \rVert^2
\end{align*}
{ The first equality adds and subtracts $\mathbf{1} \otimes \bar{x}^k$ inside the norm. We take advantage of the fact that $\W (\mathbf{1} \otimes \bar{x}^k)) = \mathbf{1} \otimes \bar{x}^k$. In the first inequality, we use \mbox{$\lVert a + b \rVert^2 \leq 2\lVert a \rVert^2 + 2\lVert b \rVert^2$} twice and then use Assumption~\ref{ass:matrix} to upper bound the spectral norm of $\W$ by $1$. In the final inequality, we use \mbox{$\lVert a + b \rVert^2 \leq 2\lVert a \rVert^2 + 2\lVert b \rVert^2$}.
} Using \eqref{eq: y_avg}, we have
\begin{align*}
\lVert (\W - \I) \x^k - \W \y^k\rVert^2 &\leq 4 \lVert \x^k - \bar{\x}^k \rVert^2 + 4 \eta^2 \lVert \y^k - \bar{\y}^k \rVert^2 \\
&\quad + 4n\eta^2\alpha^2 \lVert \overline{\nabla \f} (\x^k) \rVert^2.
\end{align*}
Using the properties in \eqref{QQproperties} the following upper bound on the consensus error holds
\begin{align*}
    \lVert \bPhi^k \rVert^2 &= \left \lVert \begin{bmatrix}
        \hat{\Q}\tran \x^k \\
        \hat{\Q}\tran \y^k
    \end{bmatrix} \right \rVert^2 = \lVert \x^k - \bar{\x}^k \rVert^2 + \lVert \y^k - \bar{\y}^k \rVert^2.
\end{align*}
Since $0 < \eta < 1$ it follows that
\begin{align} \label{eq:x_y_consensus_bound}
    \lVert \x^k - \bar{\x}^k \rVert^2 + \eta^2 \lVert \y^k - \bar{\y}^k \rVert^2 \leq \lVert \bPhi^k \rVert^2.
\end{align}
Hence, 
\begin{equation*}
\lVert (\W - \I) \x^k - \W \y^k\rVert^2 \leq 4 \lVert \bPhi^k \rVert^2 + 4n\eta^2\alpha^2 \lVert \overline{\nabla \f} (\x^k) \rVert^2.
\end{equation*}
We now bound the second case
\begin{align*}
    \lVert \eta \y^k \rVert^2 &= \eta^2 \lVert \y^k - \bar{\y}^k + \bar{\y}^k \rVert^2\\
    &\leq 2 \eta^2 \lVert \hat{\Q}\tran \y^k \rVert^2 + 2n\eta^2\alpha^2 \lVert \overline{\nabla \f} (\x^k) \rVert^2 \\
    &\leq 4 \eta^2 \lVert \bPhi^k \rVert^2 + 4n \eta^2 \alpha^2 \lVert \overline{\nabla \f} (\x^k) \rVert^2.
\end{align*}
{ In the first inequality, we apply \mbox{$\lVert a + b \rVert^2 \leq 2\lVert a \rVert^2 + 2\lVert b \rVert^2$} and use~\eqref{QQproperties} on the term $\lVert \bar{\y}^k - \bar{\y}^k \rVert$. Then, we use \eqref{eq: y_avg} on the term $\bar{\y}^k$. In the final inequality, we use \eqref{eq:x_y_consensus_bound}}.
\endproof
\begin{lemma} \label{lem:h_ineq} Let Assumptions~\ref{ass:matrix} and~\ref{ass:smooth} hold. For an \mbox{iteration $k \notin \tau$, step-size $\alpha>0$, smoothness parameter $L$} {defined in Assumption~\ref{ass:smooth}, constant $1 > \eta> 0$, and number of} local iterations $T_{\rm o}$, the following inequality holds  
\begin{align}
&\lVert \h^{k+1} \rVert^2 + \left \lVert \sum_{t= r_k T_{\rm o}}^{k-1} \left( \prod_{l = k-1}^{t} \G_l \right) \h^{t+1} \right \rVert^2 \\ \nonumber 
&\leq 8L^2 \eta^2 T_{\rm o} (1+\eta T_{\rm o})^2 \sum_{t = r_k T_{\rm o}+1}^{k} (\lVert \bPhi^t \rVert^2 {+} n \alpha^2 \lVert \overline{\nabla \f} (\x^t) \rVert^2)\\
&+ 8L^2\lambda^2(1+\eta T_{\rm o}) (\lVert \bPhi^{r_k T_{\rm o}} \rVert^2 {+} n\eta^2\alpha^2 \lVert \overline{\nabla \f} (\x^{r_k T_{\rm o}}) \rVert^2). 
\end{align}
\end{lemma}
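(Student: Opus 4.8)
The plan is to bound the two summands on the left-hand side separately and then recombine them so that the ``local'' contributions produce the first term of the claim and the single ``communication'' contribution produces the second. Throughout I would use three facts repeatedly: since $k \notin \tau$ we have $\hat{\bLambda}_k = \I$; the projection is non-expansive, $\lVert \hat{\Q}\tran v \rVert \le \lVert v \rVert$, by \eqref{QQproperties}; and $L$-smoothness (Assumption~\ref{ass:smooth}) gives $\lVert \h^{t+1} \rVert \le \lVert \hat{\bLambda}_t \rVert\, L\, \lVert \x^{t+1}-\x^{t} \rVert$, after which Lemma~\ref{lem:param_fluc} converts each increment $\lVert \x^{t+1}-\x^{t} \rVert^2$ into the quantities $\lVert \bPhi^t \rVert^2$ and $\lVert \overline{\nabla \f}(\x^t) \rVert^2$ appearing on the right.

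First I would treat $\lVert \h^{k+1} \rVert^2$. Because $k \notin \tau$, we have $\hat{\bLambda}_k = \I$, so $\lVert \h^{k+1} \rVert^2 \le L^2 \lVert \x^{k+1}-\x^{k} \rVert^2$, and the ``else'' case of Lemma~\ref{lem:param_fluc} yields $\lVert \h^{k+1} \rVert^2 \le 4L^2\eta^2(\lVert \bPhi^k \rVert^2 + n\alpha^2 \lVert \overline{\nabla \f}(\x^k) \rVert^2)$; this is precisely the $t=k$ contribution to the first term of the claim. For the sum I would split off the single term $t = r_k T_{\rm o}$ (the only one whose matrix product crosses a mixing step) via $\lVert a+b \rVert^2 \le 2\lVert a \rVert^2 + 2\lVert b \rVert^2$, writing the sum as $P+S$ where $P$ is the $t=r_kT_{\rm o}$ term and $S$ collects $t = r_kT_{\rm o}+1,\dots,k-1$. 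In $S$ every index is local, so $\lVert \prod_{l=k-1}^{t} \G_l \rVert \le 1+\eta T_{\rm o}$ by Lemma~\ref{lem:prod_matrix_local}, each $\hat{\bLambda}_t = \I$, and the ``else'' case of Lemma~\ref{lem:param_fluc} applies; combining this with $\lVert \sum_i a_i \rVert^2 \le N \sum_i \lVert a_i \rVert^2$ and the fact that $S$ has at most $T_{\rm o}$ terms gives $2\lVert S \rVert^2 \le 8L^2\eta^2 T_{\rm o}(1+\eta T_{\rm o})^2 \sum_{t=r_kT_{\rm o}+1}^{k-1}(\lVert \bPhi^t \rVert^2 + n\alpha^2 \lVert \overline{\nabla \f}(\x^t) \rVert^2)$. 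Adding the $t=k$ term from $\lVert \h^{k+1} \rVert^2$ and using $T_{\rm o}(1+\eta T_{\rm o})^2 \ge 1$ to absorb the smaller constant reproduces the first term, now with the summation running from $r_kT_{\rm o}+1$ to $k$.

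For $P$ I would use that $r_kT_{\rm o} \in \tau$: the product $\prod_{l=k-1}^{r_kT_{\rm o}} \G_l$ picks up one mixing factor and is bounded by $\lambda(1+\eta T_{\rm o})$ through Lemma~\ref{lem:prod_matrix_one_round}, while $\hat{\bLambda}_{r_kT_{\rm o}} = \hat{\bLambda}$ contributes a further factor $\lambda$ to $\lVert \h^{r_kT_{\rm o}+1} \rVert$. Here the increment sits at a communication step, so the ``$k\in\tau$'' case of Lemma~\ref{lem:param_fluc} supplies $\lVert \x^{r_kT_{\rm o}+1}-\x^{r_kT_{\rm o}} \rVert^2 = 4\lVert \bPhi^{r_kT_{\rm o}} \rVert^2 + 4n\eta^2\alpha^2\lVert \overline{\nabla \f}(\x^{r_kT_{\rm o}}) \rVert^2$, giving $2\lVert P \rVert^2 \le 8L^2\lambda^4(1+\eta T_{\rm o})^2\big(\lVert \bPhi^{r_kT_{\rm o}} \rVert^2 + n\eta^2\alpha^2\lVert \overline{\nabla \f}(\x^{r_kT_{\rm o}}) \rVert^2\big)$.

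The main obstacle is reconciling the exponents: the naive estimate on $P$ carries $\lambda^4(1+\eta T_{\rm o})^2$, whereas the claim keeps only $\lambda^2(1+\eta T_{\rm o})$. Closing this gap needs $\lambda^2(1+\eta T_{\rm o}) \le 1$, since then $\big(\lambda^2(1+\eta T_{\rm o})\big)^2 \le \lambda^2(1+\eta T_{\rm o})$; this holds in the regime of Theorem~\ref{thm:convg_lugt} because $\lambda^2(1+\eta T_{\rm o}) \le \lambda(1+\eta T_{\rm o})^2 = \theta < 1$, using $\lambda/(1+\eta T_{\rm o}) \le 1$. Some care is also required for the degenerate case $k = r_kT_{\rm o}+1$, where $S$ is empty and the product collapses to the single matrix $\G_{r_kT_{\rm o}}$, and for aligning the two elementary inequalities so that the factors of $2$ and $T_{\rm o}$ match the stated constants exactly.
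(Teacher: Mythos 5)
Your proposal is correct and follows essentially the same route as the paper's proof: the same splitting of the sum into the single $t=r_kT_{\rm o}$ term and the purely local remainder, the same use of Lemmas~\ref{lem:prod_matrix_local}, \ref{lem:prod_matrix_one_round} and \ref{lem:param_fluc}, and the same absorption of $\lVert\h^{k+1}\rVert^2$ into the sum running up to $t=k$. The only substantive difference is that you explicitly justify replacing $\lambda^4(1+\eta T_{\rm o})^2$ by $\lambda^2(1+\eta T_{\rm o})$ via $\lambda^2(1+\eta T_{\rm o})\le\lambda(1+\eta T_{\rm o})^2=\theta<1$, a reduction the paper's second displayed inequality performs silently.
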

\proof
First, we use \mbox{$\lVert a + b \rVert^2 \leq 2\lVert a \rVert^2 + 2\lVert b \rVert^2$} to obtain,
\begin{align*}
    &\lVert \h^{k+1} \rVert^2 + \left \lVert \sum_{t= r_k T_{\rm o}}^{k-1} \left( \prod_{l = k-1}^{t} \G_l \right) \h^{t+1} \right \rVert^2 \\
    &\leq \lVert \h^{k+1} \rVert^2 + 2 \left \lVert  \left( \prod_{l = k-1}^{r_k T_{\rm o}} \G_l \right) \h^{r_k T_{\rm o}+1} \right \rVert^2 \\
    &+ 2\left \lVert \sum_{t= r_k T_{\rm o}+1}^{k-1} \left( \prod_{l = k-1}^{t} \G_l \right) \h^{t+1} \right \rVert^2\\
    &\leq L^2 \lVert \x^{k+1}{-}\x^{k} \rVert^2{+}2L^2\lambda^2(1+\eta T_{\rm o}) \lVert \x^{r_k T_{\rm o}+1} - \x^{r_k T_{\rm o}}\rVert^2 \\
    &+ 2 L^2 T_{\rm o} (1+\eta T_{\rm o})^2 \sum_{t = r_k T_{\rm o}+1}^{k-1} \left \lVert \x^{t+1}{-}\x^{t} \right \rVert^2 \\
    &\leq 4L^2\eta^2 (\lVert \bPhi^k \rVert^2 {+} n \alpha^2 \lVert \overline{\nabla \f} (\x^k) \rVert^2) \\
    &+ 8L^2\lambda^2(1+\eta T_{\rm o}) (\lVert \bPhi^{r_k T_{\rm o}} \rVert^2 {+} n\eta^2\alpha^2 \lVert \overline{\nabla \f} (\x^{r_k T_{\rm o}}) \rVert^2) \\
    &+ 8L^2 \eta^2 T_{\rm o} (1+\eta T_{\rm o})^2 \sum_{t = r_k T_{\rm o}+1}^{k-1} (\lVert \bPhi^t \rVert^2 {+} n \alpha^2 \lVert \overline{\nabla \f} (\x^t) \rVert^2) \\
    &\leq 8L^2\lambda^2(1+\eta T_{\rm o}) (\lVert \bPhi^{r_k T_{\rm o}} \rVert^2 {+} n\eta^2\alpha^2 \lVert \overline{\nabla \f} (\x^{r_k T_{\rm o}}) \rVert^2) \\
    &+ 8L^2 \eta^2 T_{\rm o} (1+\eta T_{\rm o})^2 \sum_{t = r_k T_{\rm o}+1}^{k} (\lVert \bPhi^t \rVert^2 {+} n \alpha^2 \lVert \overline{\nabla \f} (\x^t) \rVert^2).
\end{align*}
{
In the second inequality, we used Lemma~\ref{lem:prod_matrix_local}, Lemma~\ref{lem:prod_matrix_one_round}, and Assumption~\ref{ass:smooth}. In the third inequality, we used Lemma~\ref{lem:param_fluc}.} In the fourth inequality, we group similar terms.
\endproof
Next, we find a bound on the consensus inequality to later use in the descent inequality. Note that we define $\sum_{t=r_k T_{\rm o}}^k (\cdot)$ as zero if $r_k T_{\rm o} > k - 1$.

\begin{lemma}[Consensus Inequality] \label{lem:consen_ineq} 
Let Assumptions~\ref{ass:matrix} and~\ref{ass:smooth} and
\begin{align*}
 \eta &< \min \left \{1, {(1 - \sqrt{\lambda})}/{(\sqrt{\lambda}(1+T_{\rm o}))}\right\},\\
    \alpha &\leq \min \Biggl\{ \sqrt{\frac{\lambda(1-\lambda)(1-\theta)}{16L^2T_{\rm o}}}, \sqrt{\frac{(\bar{\lambda}-\bar{\lambda}^2)\lambda  (1-\theta)}{8L^2\eta^2T^2_{\rm o}}}  \Biggl\},
\end{align*}
hold. Then, the output of Algorithm \eqref{alg:LU_GT} satisfies the following inequality 
\begin{align}
&\frac{1}{K}  \sum_{k=0}^{K-1} \lVert \bPhi^k \rVert^2 \leq \frac{(1-\bar{\lambda}) (\frac{1}{K} \sum_{k=0}^{K-1}\bar{\lambda}^{r_{k-1}+1})}{1-\bar{\lambda}-e_1T_{\rm o}} \lVert \bPhi^0 \rVert^2 \nonumber \\
      &+ \left( \frac{e_2 T_{\rm o}}{K(1 - \bar{\lambda} - e_1 T_{\rm o})} \right) \sum_{k=0}^{K-1} \left ( \lVert \overline{\nabla \mathbf{f}}(\mathbf{x}^k) \rVert^2 + \lVert \nabla f (\bar{x}^k) \rVert^2 \right),
\end{align}
\mbox{where $e_1 \define \frac{8L^2\eta^2\alpha^2 T_{\rm o} (1+\eta T_{\rm o})^2}{(1-\theta)},e_2 \define \frac{8nL^2\eta^2\alpha^4 T_{\rm o} (1+\eta T_{\rm o})^2}{(1 - \theta)}$}, and $r_k \define \lfloor k/T_{\rm o} \rfloor$.
\end{lemma}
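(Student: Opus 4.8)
The plan is to unroll the augmented consensus recursion \eqref{eq:final_transformation} block by block and then sum the resulting per-iterate bound over $k$. Fix an iterate $k$ and let $s \define r_k T_{\rm o}$ denote the start of the block containing $k$. Applying $\lVert a + b\rVert^2 \le 2\lVert a\rVert^2 + 2\lVert b\rVert^2$ to \eqref{eq:final_transformation} separates the propagated block-start term from the accumulated forcing term:
\begin{equation*}
\lVert \bPhi^{k+1}\rVert^2 \le 2\left\lVert \prod_{l=k}^{s}\G_l\right\rVert^2 \lVert \bPhi^{s}\rVert^2 + 2\alpha^2\left( \lVert \h^{k+1}\rVert^2 + \left\lVert \sum_{t=s}^{k-1}\Big(\prod_{l=k-1}^{t}\G_l\Big)\h^{t+1}\right\rVert^2\right).
\end{equation*}
The first term is handled by Lemma~\ref{lem:prod_matrix_one_round}, which supplies the squared contraction factor $\lambda^2(1+\eta T_{\rm o})^2$, and the bracketed quantity is exactly what Lemma~\ref{lem:h_ineq} bounds, turning it into a weighted sum of the within-block consensus quantities $\lVert \bPhi^t\rVert^2$ (for $s < t \le k$) and the block-start term $\lVert \bPhi^{s}\rVert^2$, together with the gradient norms $\lVert \overline{\nabla \f}(\x^t)\rVert^2$. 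This yields, for each $k$, an inequality of the schematic form $\lVert \bPhi^{k+1}\rVert^2 \le (\text{contraction})\,\lVert \bPhi^{s}\rVert^2 + e_1\sum_{t=s+1}^{k}\lVert \bPhi^t\rVert^2 + e_2\sum_{t=s+1}^{k}\lVert \overline{\nabla \f}(\x^t)\rVert^2$, with $e_1,e_2$ as in the statement (the $1/(1-\theta)$ normalization emerges from collapsing the geometric sums that arise in the unrolling, which converge precisely because the bound on $\eta$ forces $\theta=\lambda(1+\eta T_{\rm o})^2<1$).

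Next I would sum this inequality over $k=0,\dots,K-1$. The crucial bookkeeping step is the double sum of within-block consensus terms: since each block has at most $T_{\rm o}$ iterations, every $\lVert \bPhi^t\rVert^2$ appears in at most $T_{\rm o}$ of the inner sums, so $\sum_{k}\sum_{t=s+1}^{k}\lVert \bPhi^t\rVert^2 \le T_{\rm o}\sum_{k}\lVert \bPhi^k\rVert^2$; moving this contribution to the left contributes the $-e_1 T_{\rm o}$ piece of the final denominator. The gradient terms are collected on the right with coefficient proportional to $e_2 T_{\rm o}$, and using $\lVert \overline{\nabla \f}(\x^k)\rVert^2 \le \lVert \overline{\nabla \f}(\x^k)\rVert^2 + \lVert \nabla f(\bar x^k)\rVert^2$ they are put in the symmetric form appearing in the statement.

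The remaining ingredient is the geometric decay of the block-start terms. Chaining the block-to-block map $\lVert \bPhi^{(j+1)T_{\rm o}}\rVert^2 \lesssim \lambda^2(1+\eta T_{\rm o})^2\lVert \bPhi^{jT_{\rm o}}\rVert^2 + (\text{perturbation})$ down to $j=0$, I would show that the effective per-block factor, after absorbing the $\h$-induced perturbation, is bounded by $\bar\lambda=(1+\lambda)/2$; this is where the condition $\alpha^2 \le (\bar\lambda - \bar\lambda^2)\lambda(1-\theta)/(8L^2\eta^2T_{\rm o}^2)$ is used, since it forces the perturbation to fit inside the gap between $\lambda^2(1+\eta T_{\rm o})^2$ and $\bar\lambda$. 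Summing the resulting geometric series yields the factor $\frac1K\sum_k \bar\lambda^{\,r_{k-1}+1}$ and, through the $(1-\bar\lambda)\sum_{\text{block starts}}$ telescoping, the $1-\bar\lambda$ appearing in both the numerator and the denominator. Assembling the two effects and dividing through by $1-\bar\lambda-e_1T_{\rm o}$, which the first bound on $\alpha$ keeps strictly positive, gives the claimed consensus inequality.

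The hard part will be the block-coupling analysis of the last paragraph: simultaneously (i) extracting the clean $\bar\lambda$ per-block contraction from the raw factor $\lambda(1+\eta T_{\rm o})$ by absorbing the forcing perturbation, and (ii) ensuring the within-block consensus terms reintroduced by Lemma~\ref{lem:h_ineq} are small enough to be carried to the left-hand side with a positive net coefficient. Both requirements translate exactly into the two upper bounds on $\alpha$ and the bound on $\eta$. Careful index accounting for which iterates are communication steps ($k\in\tau$) versus local steps is needed throughout so that Lemmas~\ref{lem:prod_matrix_local}--\ref{lem:h_ineq} are applied with the correct index ranges.
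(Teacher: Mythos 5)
Your overall architecture matches the paper's: unroll \eqref{eq:final_transformation} within a block, invoke Lemmas~\ref{lem:prod_matrix_one_round} and~\ref{lem:h_ineq}, sum over $k$, swap the order of summation so the within-block terms contribute $e_1 T_{\rm o}$ to the left-hand side, and chain the block-start terms geometrically with rate $\bar\lambda$. However, there is a genuine gap in your very first step, and it is not cosmetic. You split \eqref{eq:final_transformation} with the unweighted inequality $\lVert a+b\rVert^2 \le 2\lVert a\rVert^2 + 2\lVert b\rVert^2$, which puts a factor of $2$ on the propagated block-start term, giving a raw per-block coefficient of $2\lambda^2(1+\eta T_{\rm o})^2$. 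This quantity is at least $2\lambda^2$, so it cannot be absorbed into $\bar\lambda=(1+\lambda)/2<1$ once $\lambda$ is moderately large (e.g.\ already $2\lambda^2>\bar\lambda$ for $\lambda\gtrsim 0.64$, and $2\lambda^2\ge 1$ for $\lambda\ge 1/\sqrt{2}$); the block-to-block contraction you rely on in the last paragraph then simply does not exist for sparse graphs. The paper instead uses the weighted Jensen/Young inequality $\lVert a+b\rVert^2 \le \tfrac1\theta\lVert a\rVert^2 + \tfrac1{1-\theta}\lVert b\rVert^2$ and \emph{chooses} $\theta=\lambda(1+\eta T_{\rm o})^2$, so that the propagated coefficient becomes exactly $\lambda^2(1+\eta T_{\rm o})^2/\theta=\lambda<1$, which the first condition on $\alpha$ then pushes up to $\bar\lambda$. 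This is the missing idea.

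A second, related symptom: you assert that the $1/(1-\theta)$ in $e_1$ and $e_2$ ``emerges from collapsing the geometric sums that arise in the unrolling.'' It does not --- it is the price paid on the forcing term by the weighted Jensen split above, and the condition $\theta<1$ (equivalently the stated bound on $\eta$) exists precisely so that this price is finite. With your factor-$2$ decomposition you would obtain constants without the $(1-\theta)^{-1}$ factor and would have no mechanism producing it later, so you could not recover the $e_1,e_2$ appearing in the statement. Replacing your first inequality with the $\theta$-weighted version and setting $\theta=\lambda(1+\eta T_{\rm o})^2$ repairs both problems and brings your argument into line with the paper's proof.
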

\proof
We take the norm of (\ref{eq:final_transformation}) and apply Jensen's inequality for any $0 < \theta < 1$.
\begin{align*}
&\lVert \bPhi^{k+1}\rVert^2 \leq \frac{1}{\theta} \left \lVert \left( \prod_{l=k}^{r_kT_{\rm o}} \mathbf{G}_l
     \right) \bPhi^{r_kT_{\rm o}} \right \rVert^2 \\
     &+ \frac{2\alpha^2}{(1-\theta)}  \Biggl( \lVert \h^{k+1} \rVert^2 + \left \lVert \sum_{t= r_k T_{\rm o}}^{k-1} \left( \prod_{l=k-1}^{t} \mathbf{G}_l \right) \h^{t+1} \right \rVert^2 \Biggl)\\
     &\quad \\
     &\leq \frac{\lambda^2 (1 + \eta T_{\rm o})^2}{\theta}  \left \lVert \bPhi^{r_kT_{\rm o}} \right \rVert^2 \\
     &+ \frac{8L^2 \alpha^2 (1+\eta T_{\rm o})^2}{ (1-\theta)} \left( \sum_{t=r_k T_{\rm o}+1}^{k} T_{\rm o} \eta^2\lVert \bPhi^t \rVert^2 + \lambda^2 
     \lVert \bPhi^{r_k T_{\rm o}} \rVert^2 \right)\\
     &+\frac{8nL^2 \eta^2 \alpha^4 (1+\eta T_{\rm o})^2}{(1-\theta)}\Biggl(\sum_{t=r_k T_{\rm o}+1}^{k} T_{\rm o} \lVert \overline{\nabla \f} (\x^k) \rVert^2 \\
     &+ \lambda^2 \lVert \overline{\nabla \f} (\x^{r_k T_{\rm o}}) \rVert^2) \Biggl).
\end{align*}
In the second inequality, we applied the results from Lemma~\ref{lem:prod_matrix_one_round} and Lemma~\ref{lem:h_ineq}. Set $\theta = \lambda(1 + \eta T_{\rm o})^2 < 1 \Rightarrow \eta < \frac{1 - \sqrt{\lambda}}{\sqrt{\lambda}(1+T_{\rm o})}$. Moreover, define $
    e_1 \define \frac{8L^2\eta^2\alpha^2 T_{\rm o} (1+\eta T_{\rm o})^2}{(1-\theta)}$, and $e_2 \define \frac{8nL^2\eta^2\alpha^4 T_{\rm o} (1+\eta T_{\rm o})^2}{(1 - \theta)}$, which allows us to obtain the following
\begin{align*}
\lVert \bPhi^{k+1} \rVert^2 &\leq (\lambda+\frac{\lambda^2 e_1}{T_{\rm o}\eta^2}) \left \lVert \bPhi^{r_kT_{\rm o}} \right \rVert^2 + e_1 \sum_{t=r_k T_{\rm o}+1}^{k} \lVert \bPhi^t \rVert^2 + \\
&+ e_2 \left( \sum_{t=r_k T_{\rm o} +1 }^{k} \lVert \overline{\nabla \f} (\x^t) \rVert^2+ \frac{\lambda^2}{T_{\rm o}}\lVert \overline{\nabla \f} (\x^{r_t T_{\rm o}}) \rVert^2 \right).
\end{align*}
Choose $\alpha$ such that
\begin{equation*}
    \lambda + \frac{\lambda^2 e_1}{T_{\rm o}\eta^2} \leq \frac{1+\lambda}{2} \Rightarrow \alpha \leq \sqrt{\frac{(1-\lambda)(1-\theta)}{16 L^2 \lambda}}.
\end{equation*}
Defining $\bar{\lambda} = (1+\lambda)/2$ and observing that $\frac{\lambda^2}{T_{\rm o}} < 1$, we have
\begin{equation} \label{eq:ineq_cons}
    \lVert \bPhi^{k+1} \rVert^2 {\leq} \bar{\lambda} \lVert \mathbf{\bPhi}^{r_k T_{\rm o}} \rVert^2 + e_1 \hspace{-4mm}\sum_{t = r_k T_{\rm o}+1}^k \hspace{-2mm} \lVert \mathbf{\bPhi}^t \rVert^2 + e_2 \hspace{-2mm}\sum_{t=r_k T_{\rm o}}^k \hspace{-2mm} \lVert \overline{\nabla \f} (\x^t) \rVert^2.
\end{equation}
When $k = r_k T_{\rm o}-1$, we have 
$$\lVert \bPhi^{r_k T_{\rm o}} \rVert^2 {\leq} \bar{\lambda} \lVert \bPhi^{(r_{k-1}) T_{\rm o}} \rVert^2 {+} e_1 \hspace{-0.5cm} \sum_{t = (r_{k}-1) T_{\rm o}}^{r_k T_{\rm o} - 1}  \hspace{-0.5cm}  \lVert \bPhi^t \rVert^2 + e_2   \hspace{-0.5cm} \sum_{t=(r_{k}-1) T_{\rm o}}^{r_k T_{\rm o} - 1} \hspace{-0.5cm} \lVert \overline{\nabla \f} (\x^t) \rVert^2.$$  
Substitute the above into \eqref{eq:ineq_cons} and iterate to find
\begin{align*}
    \lVert \bPhi^{k+1} \rVert^2 &\leq \bar{\lambda}^{r_k} \lVert \bPhi^0 \rVert^2  + e_1 \Biggl (\sum_{t=r_k T_{\rm o}}^k \lVert \bPhi^{t} \rVert^2 + \nonumber \\ 
    & \bar{\lambda} \sum_{t=(r_k -1)T_{\rm o}}^{r_k T_{\rm o} - 1} \lVert \bPhi^{t}\rVert^2 + \cdots +  \bar{\lambda}^{r_k} \sum_{t=0}^{T_{\rm o} - 1} \lVert \bPhi^{t}\rVert^2 \Biggl) \\ 
    &+ e_2  \Biggl(\sum_{t = r_kT_{\rm o}}^k \lVert \overline{\nabla \f}(\x^t) \rVert^2  
    + \bar{\lambda} \sum_{t=(r_k - 1)T_{\rm o}}^{r_k T_{\rm o}-1} \lVert \overline{\nabla \f}(\x^t) \rVert^2 \\
    &+ \cdots \bar{\lambda}^{r_k} \sum_{t=0}^{T_{\rm o} - 1} \lVert \overline{\nabla \f} (\x^k) \rVert^2 \Biggl).
\end{align*}
Recall that $r_k = \lfloor \frac{k}{T_{\rm o}} \rfloor$. Thus, we introduce the notation 
\begin{equation*}
	\bar{\lambda}^{(k, t)} \triangleq 
	\begin{cases}
        0 & t \leq -1 \\
		1 & r_k T_{\rm o} \leq t \leq k \\
		\bar{\lambda} & (r_k-1) T_{\rm o} \leq t \leq r_k T_{\rm o}-1 \\
		\bar{\lambda}^2 & (r_k-2) T_{\rm o} \leq t \leq (r_k-1) T_{\rm o}-1 \\
		\vdots & \vdots \\
		\bar{\lambda}^{r_k} & 0 \leq t \leq T_{\rm o} -1.
	\end{cases}
\end{equation*}
We can then describe the previous bound more compactly as
\begin{align*}
    \lVert \bPhi^{k} \rVert^2 &\leq \bar{\lambda}^{r_{k-1} + 1}\lVert \bPhi^0 \rVert^2 \\
    &+ e_1 \sum_{t=0}^{k-1} \bar{\lambda}^{(k-1, t)} \lVert \bPhi^t \rVert^2 + e_2 \sum_{t=0}^{k-1} \bar{\lambda}^{(k-1, t)} \lVert \overline{\nabla \f} (\x^t) \rVert^2.
\end{align*}
when setting $k+1$ as $k$. Then, we average over $k = 0,...,K-1$ and upper bound the result as follows
\begin{align*}
 &\frac{1}{K} \sum_{k=0}^{K-1} \lVert \bPhi^k \rVert^2 \leq   \\
 &\frac{1}{K} \sum_{k=0}^{K-1} \bar{\lambda}^{r_{k-1} + 1}\lVert \bPhi^0 \rVert^2 + \frac{e_1}{K} \sum_{k=0}^{K-1} \sum_{t=0}^{k-1} \bar{\lambda}^{(k-1, t)} \lVert \bPhi^t \rVert^2 \\
 &+ \frac{e_2}{K} \sum_{k=0}^{K-1} \sum_{t=0}^{k-1} \bar{\lambda}^{(k-1, t)} \lVert \overline{\nabla \f} (\x^t) \rVert^2\\
 &= \frac{1}{K} \sum_{k=0}^{K-1} \bar{\lambda}^{r_{k-1} + 1}\lVert \bPhi^0 \rVert^2 + \frac{e_1}{K}\sum_{t=0}^{K-1} \sum_{k=t}^{K-1} \bar{\lambda}^{(k-1, t)} \lVert \bPhi^t \rVert^2  \\
 &+ \frac{e_2}{K} \sum_{t=0}^{K-1} \sum_{k=t}^{K-1} \bar{\lambda}^{(k-1, t)} \lVert \overline{\nabla \f} (\x^t) \rVert^2 \\
 &= \frac{1}{K} \sum_{k=0}^{K-1} \bar{\lambda}^{r_{k-1} + 1}\lVert \bPhi^0 \rVert^2+ \frac{e_1}{K}\sum_{t=0}^{K-1}  \lVert \bPhi^t \rVert^2 \sum_{k=t}^{K-1} \bar{\lambda}^{(k-1, t)}\\
 &+\frac{e_2}{K} \sum_{t=0}^{K-1} \lVert \overline{\nabla \f} (\x^t) \rVert^2 \sum_{k=t}^{K-1} \bar{\lambda}^{(k-1, t)} \\
 &\leq \frac{1}{K} \sum_{k=0}^{K-1} \bar{\lambda}^{r_{k-1} + 1}\lVert \bPhi^0 \rVert^2 + \frac{e_1 T_{\rm o}}{K(1-\bar{\lambda})}\sum_{k=0}^{K-1}  \lVert \bPhi^k \rVert^2\\
 &+ \frac{e_2 T_{\rm o}}{K (1-\bar{\lambda})} \sum_{k=0}^{K-1} \lVert \overline{\nabla \f} (\x^k) \rVert^2.
\end{align*}
{ In the first equality, we rearrange the order of the summation. In the second equality, we rearrange the terms in the summation based on their index. In the second inequality, we upper bound \mbox{$\sum_{k=0}^{K-1}\sum_{k=t}^{K-1} \bar{\lambda}^{(k-1, t)}$} with $T_{\rm o}/(1-\bar{\lambda})$ and change the indexing from $t$ to $k$ afterwards.}  
Therefore,
\begin{align*}
    \left(1 - \frac{e_1T_{\rm o}}{1 - \bar{\lambda}} \right) \frac{1}{K} &\sum_{k=0}^{K-1} \lVert \bPhi^k \rVert^2 \leq \frac{1}{K} \sum_{k=0}^{K-1} \bar{\lambda}^{r_{k-1} + 1} \lVert \bPhi^{0} \rVert^2 \\
    &+ \left ( \frac{e_2 T_{\rm o}}{K (1 - \bar{\lambda})} \right) \sum_{k=0}^{K-1} \lVert \overline{\nabla \mathbf{f}} (\x^k) \rVert^2 ,
\end{align*}
and with further simplification, we have
\begin{align}
    \left( \frac{1 - \bar{\lambda} - e_1T_{\rm o}}{1 - \bar{\lambda}} \right) &\frac{1}{K} \sum_{k=0}^{K-1} \lVert \bPhi^k \rVert^2 \leq \frac{1}{K} \sum_{k=0}^{K-1} \bar{\lambda}^{r_{k-1} + 1} \lVert \bPhi^{0} \rVert^2 \\
    &+ \left ( \frac{e_2 T_{\rm o}}{K (1 - \bar{\lambda})} \right) \sum_{k=0}^{K-1} \lVert \overline{\nabla \mathbf{f}} (\x^k) \rVert^2. 
\end{align}

By imposing the following assumption on $\alpha$
\begin{equation*}
     \frac{1 - \bar{\lambda} - e_1T_{\rm o}}{1 - \bar{\lambda}}  \geq 1-\bar{\lambda} \Rightarrow \alpha \leq \sqrt{\frac{(\bar{\lambda}-\bar{\lambda}^2)\lambda  (1-\theta)}{8L^2\eta^2T^2_{\rm o}}},
\end{equation*}
it follows that
\begin{align*}
		&\frac{1}{K} \sum_{k=0}^{K-1} \lVert \bPhi^k \rVert^2 \leq \frac{(1-\bar{\lambda}) (\frac{1}{K} \sum_{k=0}^{K-1}\bar{\lambda}^{r_{k-1}+1})}{1-\bar{\lambda}-e_1T_{\rm o}} \lVert \bPhi^0 \rVert^2 \\
        &+ \left( \frac{e_2 T_{\rm o}}{K(1 - \bar{\lambda} - e_1 T_{\rm o})} \right) \sum_{k=0}^{K-1} \lVert \overline{\nabla \mathbf{f}}(\mathbf{x}^k) \rVert^2 \\
      &\leq \frac{(1-\bar{\lambda}) (\frac{1}{K} \sum_{k=0}^{K-1}\bar{\lambda}^{r_{k-1}+1})}{1-\bar{\lambda}-e_1T_{\rm o}} \lVert \bPhi^0 \rVert^2 \\
      &+ \left( \frac{e_2 T_{\rm o}}{K(1 - \bar{\lambda} - e_1 T_{\rm o})} \right) \sum_{k=0}^{K-1} \left ( \lVert \overline{\nabla \mathbf{f}}(\mathbf{x}^k) \rVert^2 + \lVert \nabla f (\bar{x}^k) \rVert^2 \right).
\end{align*}
\endproof
We are now ready to state the proof of Theorem~\ref{thm:convg_lugt}.
\begin{proof}[Proof of Theorem~\ref{thm:convg_lugt}]
Following similar arguments as in \cite[Lemma 3]{alghunaim2021unified}, we have the following inequality 
\begin{align} \label{eq:descent_inequality}
    f(\bar{x}^{k+1}) &\leq f(\bar{x}^k) {-} \frac{\eta\alpha}{2} \lVert \nabla f (\bar{x}^k) \rVert^2 \\
    &- \frac{\eta\alpha}{4} \lVert \overline{\nabla \f} (\x^k) \rVert^2 + \frac{\eta\alpha L^2}{2n} \lVert \bPhi^k \rVert^2.
\end{align}
Reorganize and lower bound the left-hand side to find
\begin{align*}
    \frac{\eta\alpha}{4} \hspace{-0.5mm} \big(  \hspace{-0.5mm} \lVert \nabla f (\bar{x}^k) \rVert^2  \hspace{-0.5mm} {+} \lVert \overline{\nabla \f} (\x^k) \rVert^2 \hspace{-0.3mm}  \big) & {\leq} f(\bar{x}^k) {-} f(\bar{x}^{k+1}){+}\frac{\eta\alpha L^2\lVert \bPhi^k \rVert^2}{2n} .
\end{align*}
Next, subtract and add $f^*$ and set $\tilde{f}(\bar{x}^k) = f(\bar{x}^k) - f^*$, then
\begin{align*}
    \lVert \nabla f (\bar{x}^k) \rVert^2 {+} \lVert \overline{\nabla \f} (\x^k) \rVert^2  & {\leq} \frac{4}{\eta\alpha} \big(f(\bar{x}^k) {-} f(\bar{x}^{k+1}) \big){+}\frac{2 L^2}{n} \lVert \bPhi^k \rVert^2.
\end{align*}
Sum both sides from $k = 0,...,K-1$ and divide by $K$ 
\begin{align*}
\frac{1}{K}\sum_{k=0}^{K-1} \biggl(\lVert &\nabla f (\bar{x}^k) \rVert^2 + \lVert \overline{\nabla \f} (\x^k) \rVert^2\biggl) \leq  \\
& \frac{4}{\eta\alpha K}\sum_{k=0}^{K-1} (\tilde{f}(\bar{x}^k) - \tilde{f}(\bar{x}^{k+1}) + \frac{2L^2}{nK} \sum_{k=0}^{K-1}\lVert \bPhi^k \rVert^2.
\end{align*}

Using Lemma~\ref{lem:consen_ineq}, we then have the following
\begin{align*}
&\frac{1}{K}\sum_{k=0}^{K-1} \left(\lVert \nabla f (\bar{x}^k) \rVert^2 + \lVert \overline{\nabla \f} (\x^k) \rVert^2\right) \leq \frac{4}{\eta\alpha K} \tilde{f}(\bar{x}^0) \\
&+\frac{2L^2(1-\bar{\lambda})(\sum_{k=0}^{K-1}\bar{\lambda}^{r_{k-1}+1})}{nK(1-\bar{\lambda}-e_1T_{\rm o})} \lVert \bPhi^0 \rVert^2 \\
&+\left( \frac{2L^2 e_2 T_{\rm o}}{nK(1 - \bar{\lambda} - e_1 T_{\rm o})} \right) \sum_{k=0}^{K-1} \left ( \lVert \overline{\nabla \mathbf{f}}(\mathbf{x}^k) \rVert^2 + \lVert \nabla f (\bar{x}^k) \rVert^2 \right).
\end{align*}
Therefore,
\begin{align*}
    &\left(1 - \frac{2L^2 e_2 T_{\rm o}}{n(1 - \bar{\lambda} - e_1 T_{\rm o})} \right)\frac{1}{K} \sum_{k=1}^{K-1} \left(\lVert \nabla f (\bar{x}^k) \rVert^2 + \lVert \overline{\nabla \f} (\x^k) \rVert^2\right)  \\
    &\leq \frac{4}{\eta\alpha K} \tilde{f}(\bar{x}^0) + \frac{2L^2(1-\bar{\lambda})\sum_{k=0}^{K-1}\bar{\lambda}^{r_{k-1}+1})}{nK(1-\bar{\lambda}-e_1T_{\rm o})} \lVert \bPhi^0 \rVert^2.
\end{align*}
Require 
\begin{equation*}
    \frac{1}{2}\leq \left(1 - \frac{2L^2 e_2 T_{\rm o}}{n(1 - \bar{\lambda} - e_1 T_{\rm o})} \right) \Rightarrow \alpha \leq \sqrt[4]{\frac{\lambda (1-\bar{\lambda})^2 (1-\theta)}{32L^4\eta^2T^2_{\rm o}}}.
\end{equation*}
Then, we have
\begin{align*}
    \frac{1}{K} \sum_{k=0}^{K-1} &\left(\lVert \nabla f (\bar{x}^k) \rVert^2 + \lVert \overline{\nabla \f} (\x^k) \rVert^2\right) \leq \frac{8}{\eta\alpha K} \tilde{f}(\bar{x}^0)  \\
    &+ \frac{4L^2(1-\bar{\lambda})(\sum_{k=0}^{K-1}\bar{\lambda}^{r_{k-1}+1})}{nK(1-\bar{\lambda}-e_1T_{\rm o})} \lVert \bPhi^0 \rVert^2.
\end{align*}
Assume that the initialization for $x_1, x_2, ..., x_n$ is identical. Then $\mathbf{x}^0 = \mathbf{1} \otimes x^0$ (for some $x^0 \in \mathbb{R}^d$). As a result, $\mathbf{x}^0 = \bar{\mathbf{x}}^0$ meaning $\lVert \hat{\mathbf{Q}}^T \mathbf{x}^0\rVert^2 = 0$. Then,
\begin{align*}
    \lVert \bPhi^0 \rVert^2 &=  \lVert \hat{\mathbf{Q}}^T \mathbf{y}^0 \rVert^2 = \left \lVert \alpha \hat{\mathbf{Q}}\tran \nabla \mathbf{f}(\mathbf{x}^0) \right \rVert^2 \\
    &= \alpha^2 \left \lVert \nabla \mathbf{f} (\bar{\mathbf{x}}^0)^T \hat{\mathbf{Q}} \hat{\mathbf{Q}}^T \hat{\mathbf{Q}} \hat{\mathbf{Q}}^T \nabla \mathbf{f}(\bar{\mathbf{x}}^0)\right \rVert^2 \\
    &= \alpha^2 \lVert \nabla \mathbf{f}(\bar{\mathbf{x}}^0) - \mathbf{1} \otimes \overline{\nabla \mathbf{f}}(\bar{\mathbf{x}}^0) \rVert^2.
\end{align*}
Define $\zeta_0 = \lVert \nabla \mathbf{f}(\bar{\mathbf{x}}^0) - \mathbf{1} \otimes \overline{\nabla \mathbf{f}}(\bar{\mathbf{x}}^0) \rVert^2$. We also upper bound $\sum_{k=0}^{K-1}\bar{\lambda}^{r_{k-1}+1}$ with $T_{\rm o}/(1-\bar{\lambda})$, a repeating geometric sequence, \cut{and the desired relation follows.}
\end{proof}

\section{Numerical Results} \label{sec:numerical}

\cut{We simulate the performance of Algorithm~\ref{alg:LU_GT} for the following  least squares problem with a non-convex regularization term~\cite{xin2021improved}:
\begin{align}\label{eq:simu_prob}
    \min_x \frac{1}{n} \sum_{i=1}^n \lVert A_i x - b_i \rVert^2 + \rho \sum_{j = 1}^d \frac{x(j)}{1+ x(j)},
\end{align}
where} $\{A_i, b_i\}$ is the local data held by agent $i$ and $x(j)$ is the \mbox{$j-th$} component of the parameter $x$.  In our particular simulation, $A_i \in \mathbb{R}^{p \times m}$ and $b_i \in \mathbb{R}^{p}$ where $p = 500, m = 20$. The values in $A_i$ are drawn from $\mathcal{N}(0, 1)$. A parameter vector $x^*_i \in \mathbb{R}^{20}$ is generated by $x^*_i + v_i$ where $x^*_i \sim \mathcal{N}(0, I_{m})$ and $v_i \sim \mathcal{N}(0, (0.1\times i)^2I_m)$ where $i$ is the agent index. Form $b_i = A_i x^i_0 + 0.1 \times z_i$ where $z_i \in \mathbb{R}^{500}$ is noise drawn from $\mathcal{N}(0, (0.1)^2)$. This is a heterogeneous case. We examine various topologies, including the 2D-MeshGrid, star, ring, and fully-connected graphs, each with 25 nodes. We also set $\rho = 0.01$.
\par

\begin{table}[h]
\caption{Manually optimized $\eta \alpha$ used for each graph and $T_{\rm o}$ combination.}
\centering
\resizebox{0.48\textwidth}{!}{
\begin{tabular}{l  l l l l l }
\toprule
                       & $T_{o} = 1$ & $T_{o} = 2$ &$T_{o} = 5$  & $T_{o} = 10$ & $T_{o} = 50$ \\ \toprule
\multicolumn{1}{l}{\textbf{Complete}} & $2\times 10^{-3}$ & $2\times 10^{-3}$ & $2\times 10^{-3}$ & $2\times 10^{-3}$ & $2\times 10^{-3}$ \\ \midrule
\multicolumn{1}{l}{\textbf{2D-Grid}} & $1\times 10^{-4}$ & $.5\times 10^{-4}$ & $.2\times 10^{-4}$ & $.1\times 10^{-4}$ & $.02\times 10^{-4}$  \\ \midrule
\multicolumn{1}{l}{\textbf{Ring}} & $2\times 10^{-5}$ & $1\times 10^{-5}$ & $0.4\times 10^{-5}$ & $.2\times 10^{-5}$ & $0.04\times 10^{-5}$ \\ \midrule
\multicolumn{1}{l}{\textbf{Star}} & $.4\times 10^{-4}$ & $.2\times 10^{-4}$ & $.08\times 10^{-4}$ & $.04\times 10^{-4}$ & $.008\times 10^{-4}$ \\ \bottomrule 
\end{tabular}
}
\label{table:step_size}
\end{table}

Table~\ref{table:step_size} lists the manually optimized $\eta \alpha$ for each graph and~$T_{\rm o}$ combination. Our simulation results in Figure~\ref{fig:LUGT} reveal that for fully-connected graphs LU-GT reduces communication costs. For sparse networks, the hyperparameter tuning of $\eta \alpha$ matches the suggested inversely proportional relation with $T_{\rm o}$ predicted by the theory. In this scenario, communication costs are equivalent to no local updates, matching the analysis.
\begin{figure}
\begin{tabular}{cc}
  \includegraphics[width=0.5\columnwidth]{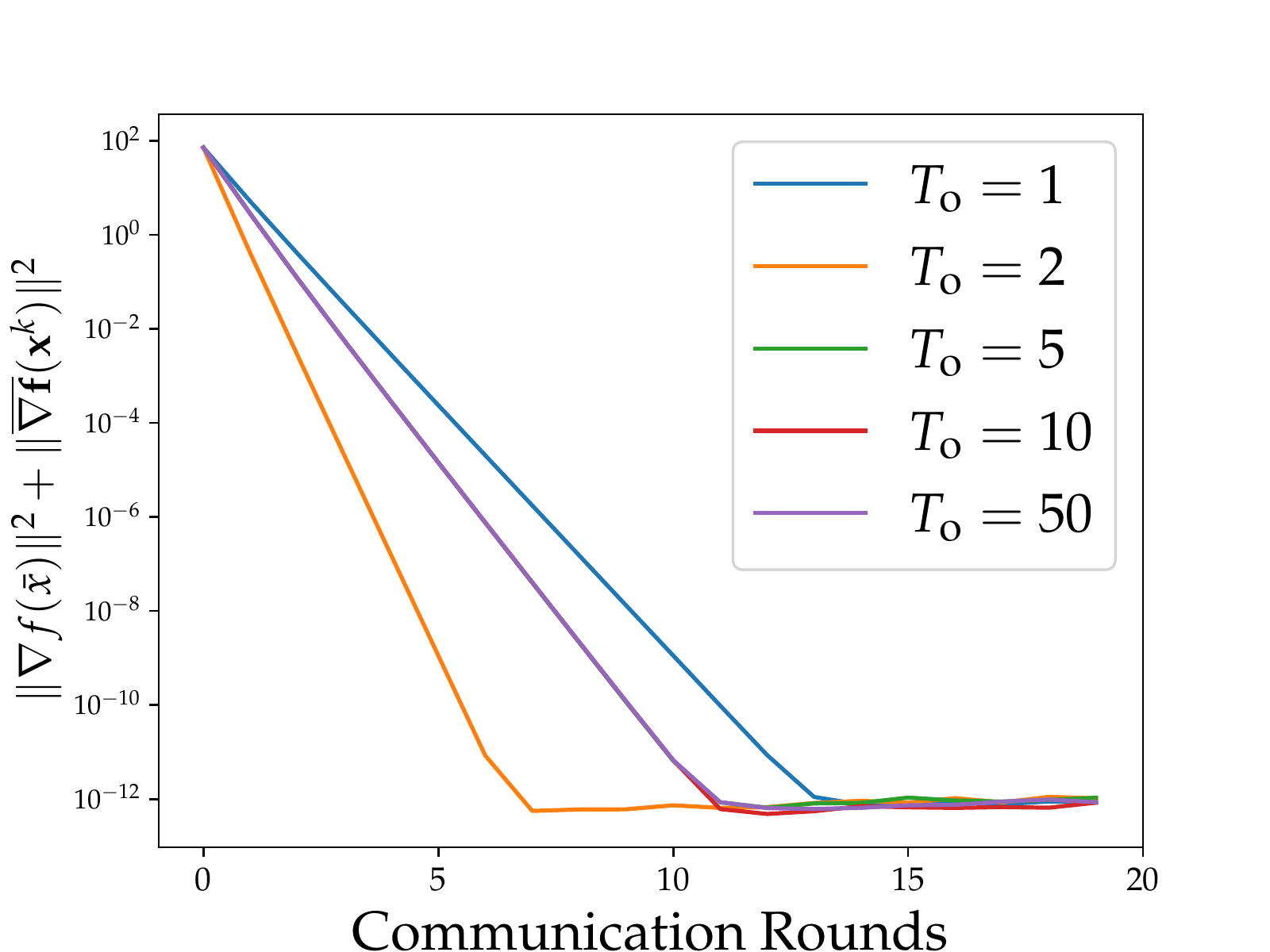} &   \includegraphics[width=0.5\columnwidth]{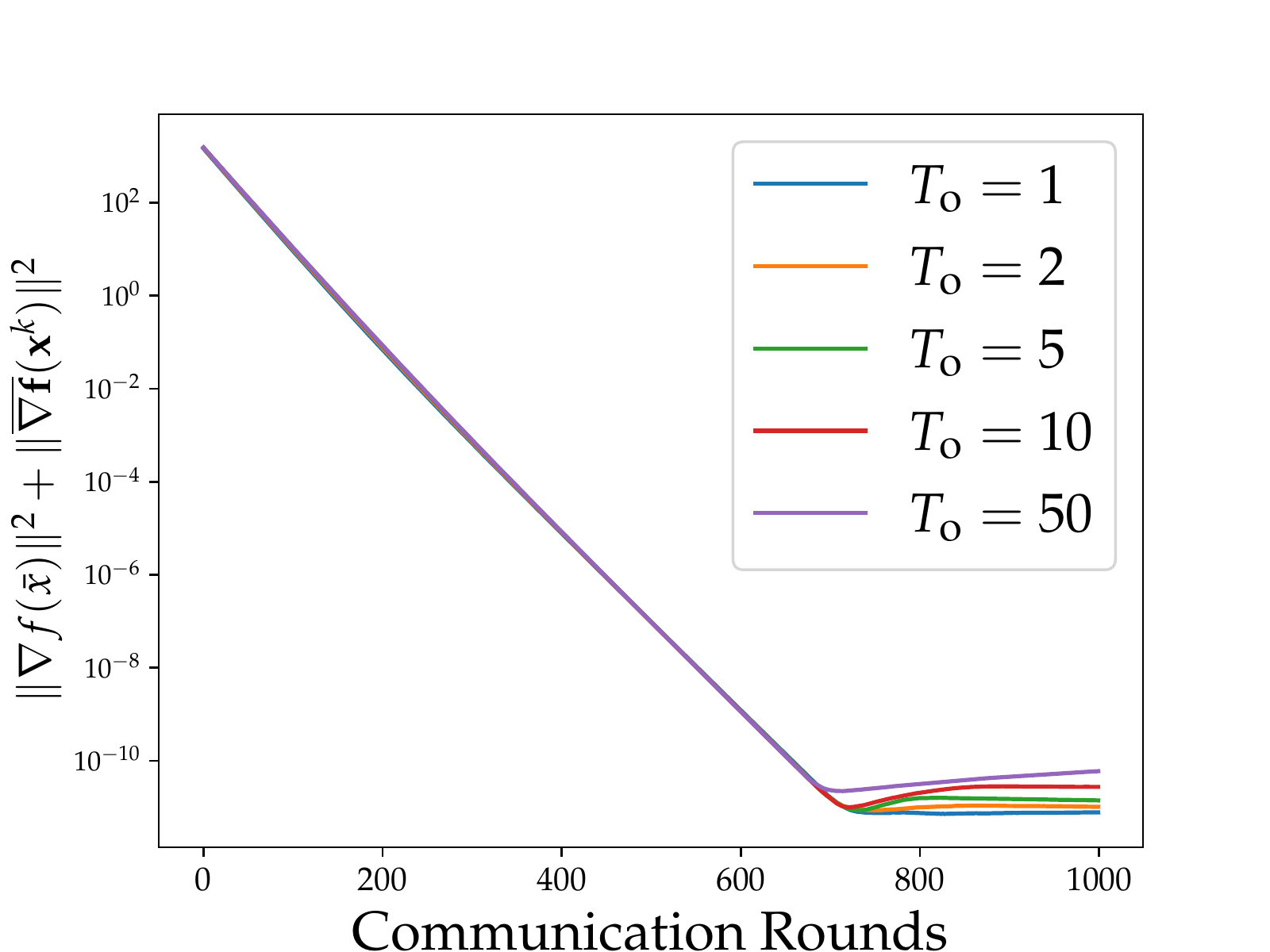} \\
(a) Fully-Connected Graph & (b) 2D-MeshGrid \\[6pt]
 \includegraphics[width=0.5\columnwidth]{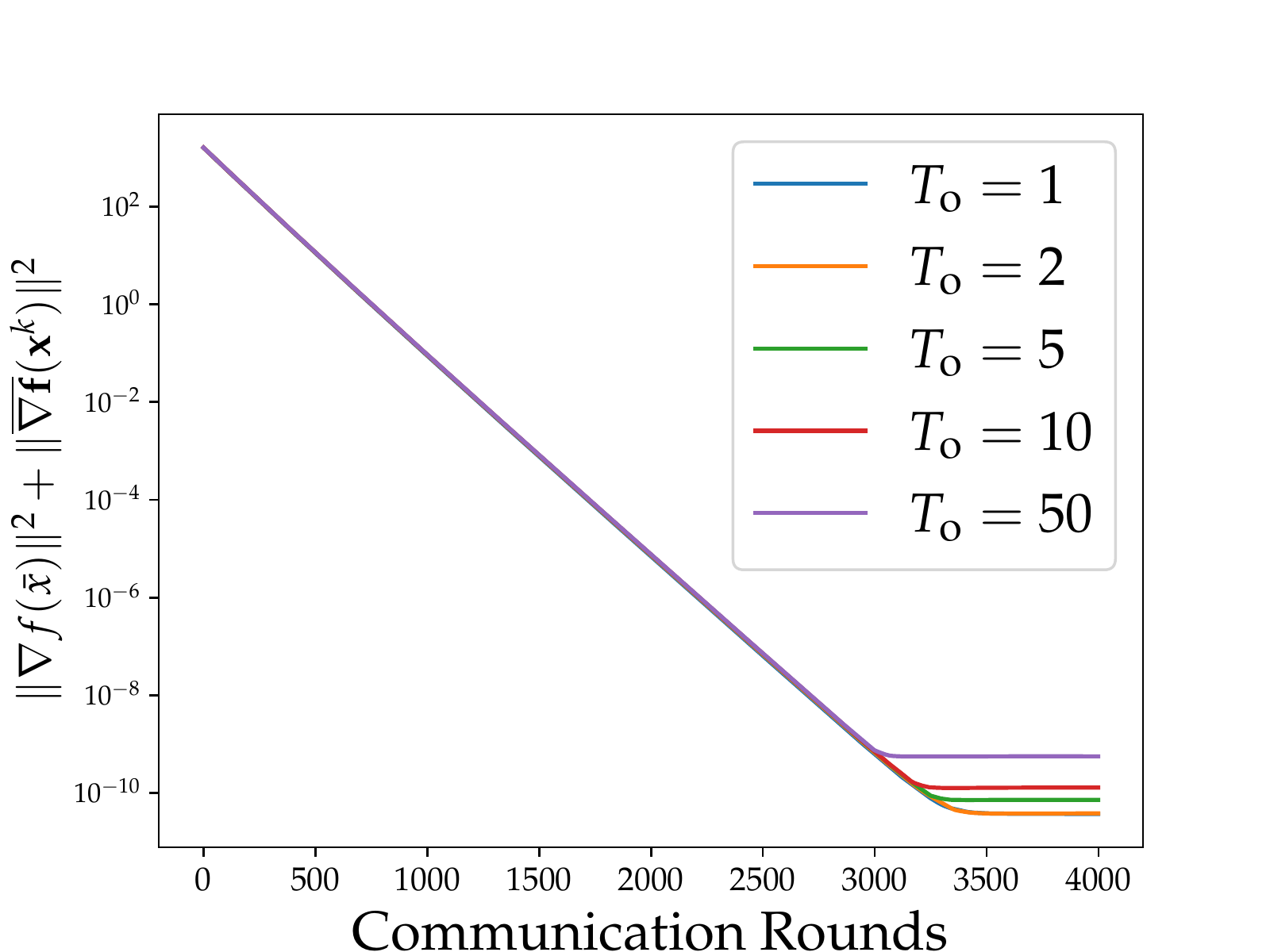} &   \includegraphics[width=0.5\columnwidth]{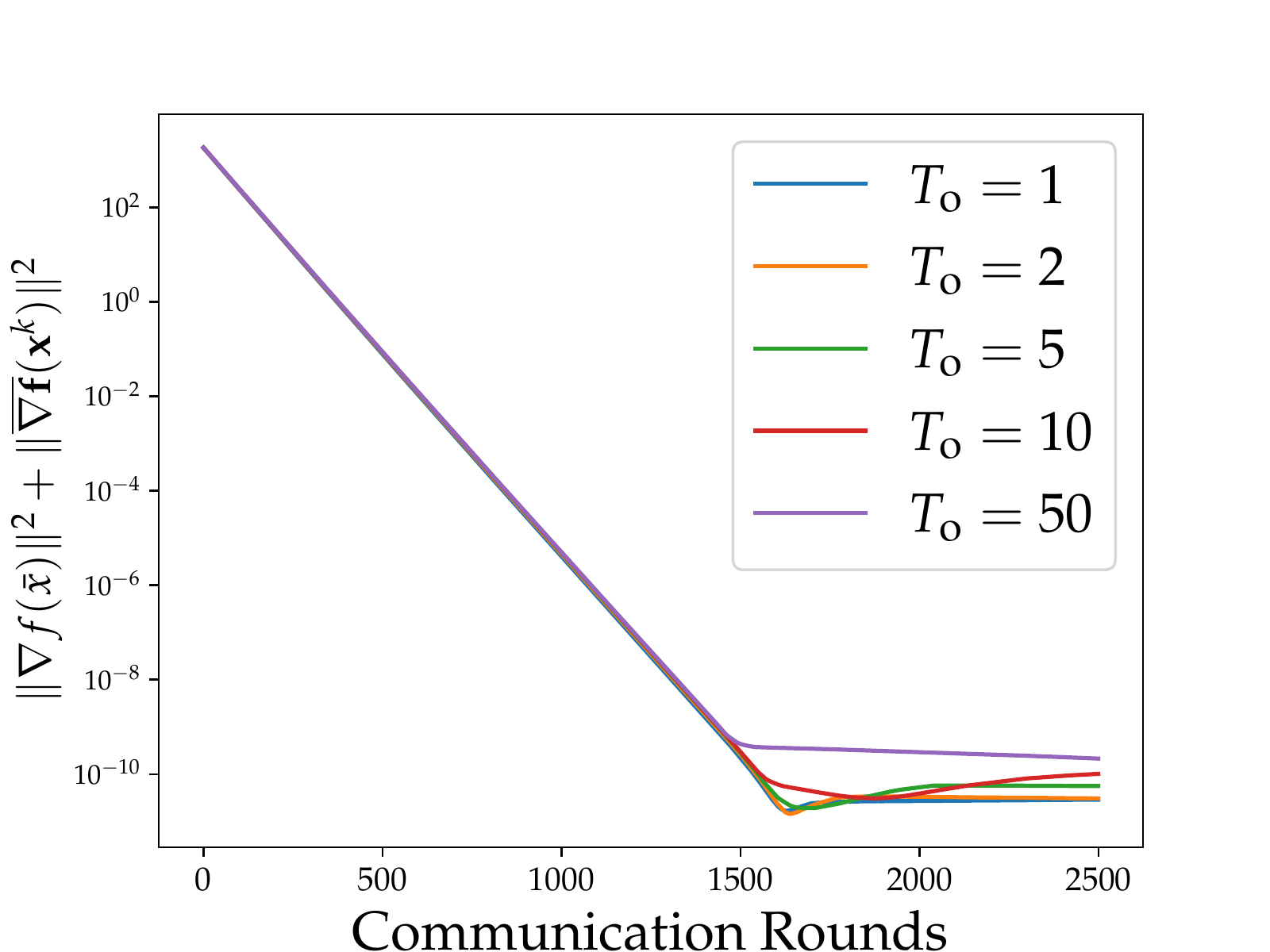} \\
(c) Ring & (d) Star \\[6pt]
\end{tabular}
\caption{Performance of LU-GT to solve~\eqref{eq:simu_prob} with varying $T_{\rm o }$, $\alpha \eta$, and topologies.}
\label{fig:LUGT}
\end{figure}

\section{Conclusions} \label{sec:concl}

We propose the algorithm LU-GT that incorporates local recursions into Gradient Tracking.  Our analysis shows that LU-GT matches the same communication complexity as the Federated Learning setting but allows arbitrary network topologies. In addition, regardless of the number of local recursions, LU-GT incurs no additional bias term in the rate. For well-connected graphs, communication complexity is reduced. Further refinement of the analysis is necessary to quantify the precise effect of local recursions on Gradient Tracking. It is still unclear under what regimes local updates reduce the communication cost and what the upper bound is on these local updates. Numerical analysis suggests that local updates might not benefit sparsely connected networks. Such explicit relations between network topologies and local updates are left for future work.












\bibliographystyle{IEEEtran}
\bibliography{References/IEEEabrv, References/myref}

\begin{thebibliography}{10}
\providecommand{\url}[1]{#1}
\csname url@samestyle\endcsname
\providecommand{\newblock}{\relax}
\providecommand{\bibinfo}[2]{#2}
\providecommand{\BIBentrySTDinterwordspacing}{\spaceskip=0pt\relax}
\providecommand{\BIBentryALTinterwordstretchfactor}{4}
\providecommand{\BIBentryALTinterwordspacing}{\spaceskip=\fontdimen2\font plus
\BIBentryALTinterwordstretchfactor\fontdimen3\font minus
  \fontdimen4\font\relax}
\providecommand{\BIBforeignlanguage}[2]{{%
\expandafter\ifx\csname l@#1\endcsname\relax
\typeout{** WARNING: IEEEtran.bst: No hyphenation pattern has been}%
\typeout{** loaded for the language `#1'. Using the pattern for}%
\typeout{** the default language instead.}%
\else
\language=\csname l@#1\endcsname
\fi
#2}}
\providecommand{\BIBdecl}{\relax}
\BIBdecl

\bibitem{boyd2011admm}
S.~Boyd, N.~Parikh, E.~Chu, B.~Peleato, and J.~Eckstein, ``Distributed
  optimization and statistical learning via alternating direction method of
  multipliers,'' \emph{Found. Trends Mach. Lear.}, vol.~3, no.~1, pp. 1--122,
  Jan. 2011.

\bibitem{ying2021bluefog}
B.~Ying, K.~Yuan, H.~Hu, Y.~Chen, and W.~Yin, ``Bluefog: Make decentralized
  algorithms practical for optimization and deep learning,'' 2021.

\bibitem{ram2010distributed}
S.~S. Ram, A.~Nedic, and V.~V. Veeravalli, ``Distributed stochastic subgradient
  projection algorithms for convex optimization,'' \emph{J. {O}ptim. {T}heory
  {A}ppl.}, vol. 147, no.~3, pp. 516--545, 2010.

\bibitem{cattivelli2010diffusion}
F.~S. Cattivelli and A.~H. Sayed, ``Diffusion {LMS} strategies for distributed
  estimation,'' \emph{IEEE Trans. Signal Process}, vol.~58, no.~3, p. 1035,
  2010.

\bibitem{shi2015extra}
W.~Shi, Q.~Ling, G.~Wu, and W.~Yin, ``{EXTRA}: An exact first-order algorithm
  for decentralized consensus optimization,'' \emph{SIAM Journal on
  Optimization}, vol.~25, no.~2, pp. 944--966, 2015.

\bibitem{yuan2019exactdiffI}
K.~Yuan, B.~Ying, X.~Zhao, and A.~H. Sayed, ``Exact diffusion for distributed
  optimization and learning—part i: Algorithm development,'' \emph{IEEE
  Transactions on Signal Processing}, vol.~67, no.~3, pp. 708--723, 2019.

\bibitem{li2017nids}
Z.~Li, W.~Shi, and M.~Yan, ``A decentralized proximal-gradient method with
  network independent step-sizes and separated convergence rates,'' \emph{IEEE
  Transactions on Signal Processing}, vol.~67, no.~17, pp. 4494--4506, Sept.
  2019.

\bibitem{yuan2020influence}
K.~Yuan, S.~A. Alghunaim, B.~Ying, and A.~H. Sayed, ``On the influence of
  bias-correction on distributed stochastic optimization,'' \emph{IEEE
  Transactions on Signal Processing}, vol.~68, pp. 4352--4367, 2020.

\bibitem{tang2018d}
H.~Tang, X.~Lian, M.~Yan, C.~Zhang, and J.~Liu, ``{D$^2$}: {D}ecentralized
  training over decentralized data,'' in \emph{International Conference on
  Machine Learning}, Stockholm, Sweden, 2018, pp. 4848--4856.

\bibitem{xu2015augmented}
J.~Xu, S.~Zhu, Y.~C. Soh, and L.~Xie, ``Augmented distributed gradient methods
  for multi-agent optimization under uncoordinated constant stepsizes,'' in
  \emph{{\em Proc. 54th} IEEE Conference on Decision and Control (CDC)},
  {O}saka, Japan, 2015, pp. 2055--2060.

\bibitem{di2016next}
P.~Di~Lorenzo and G.~Scutari, ``Next: In-network nonconvex optimization,''
  \emph{IEEE Transactions on Signal and Information Processing over Networks},
  vol.~2, no.~2, pp. 120--136, 2016.

\bibitem{qu2017harnessing}
G.~Qu and N.~Li, ``Harnessing smoothness to accelerate distributed
  optimization,'' \emph{IEEE Transactions on Control of Network Systems},
  vol.~5, no.~3, pp. 1245--1260, Sept. 2018.

\bibitem{nedic2017achieving}
A.~Nedic, A.~Olshevsky, and W.~Shi, ``Achieving geometric convergence for
  distributed optimization over time-varying graphs,'' \emph{SIAM Journal on
  Optimization}, vol.~27, no.~4, pp. 2597--2633, 2017.

\bibitem{chen2013distributed}
J.~Chen and A.~H. Sayed, ``Distributed pareto optimization via diffusion
  strategies,'' \emph{IEEE J. Sel. Topics Signal Process.}, vol.~7, no.~2, pp.
  205--220, April 2013.

\bibitem{yuan2016convergence}
K.~Yuan, Q.~Ling, and W.~Yin, ``On the convergence of decentralized gradient
  descent,'' \emph{SIAM Journal on Optimization}, vol.~26, no.~3, pp.
  1835--1854, 2016.

\bibitem{koloskova2020unified}
A.~Koloskova, N.~Loizou, S.~Boreiri, M.~Jaggi, and S.~Stich, ``A unified theory
  of decentralized {SGD} with changing topology and local updates,'' in
  \emph{International Conference on Machine Learning}, 2020, pp. 5381--5393.

\bibitem{alghunaim2021unified}
S.~A. Alghunaim and K.~Yuan, ``A unified and refined convergence analysis for
  non-convex decentralized learning,'' \emph{IEEE Transactions on Signal
  Processing}, vol.~70, pp. 3264--3279, June 2022.

\bibitem{koloskova2021improved}
A.~Koloskova, T.~Lin, and S.~U. Stich, ``An improved analysis of gradient
  tracking for decentralized machine learning,'' \emph{Advances in Neural
  Information Processing Systems}, vol.~34, pp. 11\,422--11\,435, 2021.

\bibitem{stich2018local}
S.~U. Stich, ``Local {SGD} converges fast and communicates little,'' in
  \emph{International Conference on Learning Representations}, 2019.

\bibitem{khaled2019first}
A.~Khaled, K.~Mishchenko, and P.~Richtárik, ``First analysis of local {GD} on
  heterogeneous data,'' \emph{CoRR}, vol. abs/1909.04715, 2019.

\bibitem{khaled2020tighter}
A.~Khaled, K.~Mishchenko, and P.~Richtarik, ``Tighter theory for local {SGD} on
  identical and heterogeneous data,'' in \emph{Proceedings of the Twenty Third
  International Conference on Artificial Intelligence and Statistics}, ser.
  Proceedings of Machine Learning Research, S.~Chiappa and R.~Calandra, Eds.,
  vol. 108.\hskip 1em plus 0.5em minus 0.4em\relax PMLR, 26--28 Aug 2020, pp.
  4519--4529.

\bibitem{zhang2016parallel}
J.~Zhang, C.~De~Sa, I.~Mitliagkas, and C.~Ré, ``Parallel {SGD}: When does
  averaging help?'' \emph{arXiv preprint arXiv:1606.07365}, 06 2016.

\bibitem{Lin2020Don't}
T.~Lin, S.~U. Stich, K.~K. Patel, and M.~Jaggi, ``Don't use large mini-batches,
  use local {SGD},'' in \emph{International Conference on Learning
  Representations}, 2020.

\bibitem{karimireddy2020SCAFFOLD}
S.~P. Karimireddy, S.~Kale, M.~Mohri, S.~Reddi, S.~Stich, and A.~T. Suresh,
  ``{SCAFFOLD}: Stochastic controlled averaging for federated learning,'' in
  \emph{Proceedings of the 37th International Conference on Machine Learning},
  ser. Proceedings of Machine Learning Research, H.~D. III and A.~Singh, Eds.,
  vol. 119.\hskip 1em plus 0.5em minus 0.4em\relax PMLR, 13--18 Jul 2020, pp.
  5132--5143.

\bibitem{gorbunov2021local}
E.~Gorbunov, F.~Hanzely, and P.~Richtarik, ``Local {SGD}: Unified theory and
  new efficient methods,'' in \emph{Proceedings of The 24th International
  Conference on Artificial Intelligence and Statistics}, ser. Proceedings of
  Machine Learning Research, A.~Banerjee and K.~Fukumizu, Eds., vol. 130.\hskip
  1em plus 0.5em minus 0.4em\relax PMLR, 13--15 Apr 2021, pp. 3556--3564.

\bibitem{mitra2021linear}
A.~Mitra, R.~Jaafar, G.~J. Pappas, and H.~Hassani, ``Linear convergence in
  federated learning: Tackling client heterogeneity and sparse gradients,'' in
  \emph{Advances in Neural Information Processing Systems}, A.~Beygelzimer,
  Y.~Dauphin, P.~Liang, and J.~W. Vaughan, Eds., 2021.

\bibitem{mishchenko2022proxskip}
K.~Mishchenko, G.~Malinovsky, S.~Stich, and P.~Richt{\'a}rik, ``Proxskip: Yes!
  local gradient steps provably lead to communication acceleration! finally!''
  in \emph{International Conference on Machine Learning}, 2022.

\bibitem{khaled2019tighter}
A.~Khaled, K.~Mishchenko, and P.~Richt{\'a}rik, ``Tighter theory for local
  {SGD} on identical and heterogeneous data,'' in \emph{International
  Conference on Artificial Intelligence and Statistics}.\hskip 1em plus 0.5em
  minus 0.4em\relax PMLR, 2020, pp. 4519--4529.

\bibitem{scutari2019distributed}
G.~Scutari and Y.~Sun, ``Distributed nonconvex constrained optimization over
  time-varying digraphs,'' \emph{Mathematical Programming}, vol. 176, no. 1-2,
  pp. 497--544, 2019.

\bibitem{sun2019convergence}
Y.~Sun, G.~Scutari, and A.~Daneshmand, ``Distributed optimization based on
  gradient tracking revisited: Enhancing convergence rate via surrogation,''
  \emph{SIAM Journal on Optimization}, vol.~32, no.~2, pp. 354--385, 2022.

\bibitem{saadatniaki2020decentralized}
F.~Saadatniaki, R.~Xin, and U.~A. Khan, ``Decentralized optimization over
  time-varying directed graphs with row and column-stochastic matrices,''
  \emph{IEEE Transactions on Automatic Control}, vol.~65, no.~11, pp.
  4769--4780, 2020.

\bibitem{lu2020decentralized}
S.~Lu and C.~W. Wu, ``Decentralized stochastic non-convex optimization over
  weakly connected time-varying digraphs,'' in \emph{IEEE International
  Conference on Acoustics, Speech and Signal Processing (ICASSP)}, 2020, pp.
  5770--5774.

\bibitem{hastings1970metro}
W.~K. Hastings, ``Monte carlo sampling methods using markov chains and their
  applications,'' \emph{Biometrika}, vol.~57, no.~1, pp. 97--109, 1970.

\bibitem{xin2021improved}
R.~Xin, U.~A. Khan, and S.~Kar, ``An improved convergence analysis for
  decentralized online stochastic non-convex optimization,'' \emph{IEEE
  Transactions on Signal Processing}, vol.~69, pp. 1842--1858, 2021.

\end{thebibliography}

\end{document}